\newtheorem{theorem}{Theorem}[section]
\newtheorem{corollary}[theorem]{Corollary}
\newtheorem{lemma}[theorem]{Lemma}
\newtheorem{proposition}[theorem]{Proposition}
\newtheorem*{ThmA}{Theorem A}
\newtheorem*{ThmB}{Theorem B}
\theoremstyle{definition}
\newtheorem{example}[theorem]{Example}
\newtheorem{rem}[theorem]{Remark}
\newenvironment{enumeratei}{\begin{enumerate}[\upshape (a)]}
    {\end{enumerate}}
\def\irr#1{{\rm Irr}(#1)}
\def\cs#1{{\rm cs}(#1)}
\def\vcs#1{{\rm vcs}(#1)}
\def\cent#1#2{{\bf C}_{#1}(#2)}
\def\oh#1#2{{\bf O}_{#1}(#2)}
\def\zent#1{{\bf Z}(#1)}
\def\fit#1{{\bf F}(#1)}
\begin{document}

\title[One vanishing class size]{On solvable groups with one vanishing class size}

\author[M. Bianchi et al.]{M. Bianchi}
\address{Mariagrazia Bianchi, Dipartimento di Matematica F. Enriques,\newline
Universit\`a degli Studi di Milano, via Saldini 50,
20133 Milano, Italy.}
\email{mariagrazia.bianchi@unimi.it}

\author[]{R. D. Camina}
\address{Rachel D. Camina, Fitzwilliam College, \newline Cambridge, CB3 0DG,
UK.} \email{rdc26@dpmms.cam.ac.uk}

\author[]{Mark L. Lewis}
\address{Mark L. Lewis, Department of Mathematical Sciences, \newline Kent State University, Kent, Ohio 44242 USA.}
\email{lewis@math.kent.edu}

\author[]{E. Pacifici}
\address{Emanuele Pacifici, Dipartimento di Matematica F. Enriques,
\newline Universit\`a degli Studi di Milano, via Saldini 50,
20133 Milano, Italy.}
\email{emanuele.pacifici@unimi.it}

\dedicatory{Dedicated to Carlo Casolo}

\thanks{2020 \emph{Mathematics Subject Classification.} 20E45, 20C15.
\\
\indent This research was supported by the Italian PRIN 2015TW9LSR\_006 ``Group Theory and Applications". The first and fourth authors are partially supported by INdAM-GNSAGA. The second author is partially supported by the Isaac Newton Trust.}

\keywords{Finite Groups; Vanishing Conjugacy Classes}


\begin{abstract}
Let \(G\) be a finite group, and let \(\cs G\) be the set of conjugacy class sizes of \(G\). Recalling that an element \(g\) of \(G\) is called a \emph{vanishing element} if there exists an irreducible character of \(G\) taking the value \(0\) on \(g\), we consider one particular subset of \(\cs G\), namely, the set \(\vcs G\) whose elements are the conjugacy class sizes of the vanishing elements of \(G\). Motivated by the results in~\cite{BLP}, we describe the class of the finite groups \(G\) such that \(\vcs G\) consists of a single element \emph{under the assumption that \(G\) is supersolvable or \(G\) has a normal Sylow \(2\)-subgroup} (in particular, groups of odd order are covered). As a particular case, we also get a characterization of finite groups having a single vanishing conjugacy class size \emph{which is either a prime power or square-free}.
\end{abstract}

\maketitle

\section{Introduction}

Given a finite group \(G\), let \(\cs G\) denote the set consisting of the sizes of the conjugacy classes of~\(G\). 
A well-established research field in the theory of finite groups investigates the interplay between some features of this set of positive integers and the structure of the group itself (see \cite{cam} for an overview of this subject);  in particular, several authors investigated the so-called \emph{conjugate rank} of a group~\(G\), defined as the number of elements in \(\cs G\setminus\{1\}\), and its influence on the group structure. We mention, for instance, the work by N. It\^o (\cite{ito}) and by K. Ishikawa~(\cite{ish}) showing that a finite group with conjugate rank \(1\) is, up to abelian direct factors, a group of prime-power order of nilpotency class at most \(3\); also, the class of finite groups of conjugate rank \(2\) was studied by S.~Dolfi and E. Jabara in~\cite{dolfi}. 

Recently, this research area has been intertwined with character theory via the concept of \emph{vanishing elements}, introduced by I.M. Isaacs, G. Navarro and T.R. Wolf in \cite{wolf}: an element \(g\) of a finite group \(G\) is called a vanishing element of \(G\) if there exists an irreducible character of \(G\) taking the value \(0\) on \(g\), and the conjugacy class of such an element is called a vanishing conjugacy class of $G$. Now, one can focus on a subset of \(\cs G\) ``filtered" by means of the irreducible characters of \(G\), considering the set \(\vcs G\) of vanishing conjugacy class sizes of \(G\) (see also \cite{DPSex} for a survey on results concerning vanishing elements and vanishing conjugacy classes). In view of the previous paragraph, a natural issue in this context can be to investigate the structure of finite groups having a unique (non-central) vanishing conjugacy class size. This question was considered in \cite{BLP}, where a characterization is obtained under the assumption that the unique vanishing conjugacy class size is a prime number \(p\), namely: \(\vcs G=\{p\}\) if and only if either \(G\) is (up to abelian direct factors) a \(p\)-group with \(\cs G=\{1,p\}\), or \(G/\zent G\) is a Frobenius group whose Frobenius kernel has order \(p\).

One of the main results of this paper is a generalization of the aforementioned Theorem~{1} of \cite{BLP}.

\begin{ThmA}
Let $G$ be a finite group, let $s$ be an integer that is either a prime power or square-free. Then, denoting by $\pi$ the set of prime divisors of $s$, we have $\vcs G = \{ s \}$ if and only if $G = NH$, where $N$ is a normal Hall $\pi$-subgroup and $H$ an abelian $\pi$-complement of \(G\), and one of the following occurs.
\begin{enumeratei}
\item The integer \(s\) is a prime power, $G = N \times H$, and ${\rm cs} (N) = \{ 1, s \}$.
\item $G/\zent G$ is a Frobenius group whose Frobenius kernel $N\zent G/\zent G$ has order~$s$, a Frobenius complement is $H\zent G/\zent G$, and $N$ contains no vanishing elements of $G$.
\end{enumeratei}
\end{ThmA}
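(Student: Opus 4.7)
My plan is to handle the ``if'' direction by direct verification in each case, then tackle the ``only if'' direction by first establishing the Hall-decomposition structure $G = NH$ and subsequently splitting according to whether $H$ centralizes $N$; the strategy is to recover, after suitable reductions, a situation to which the main theorem of \cite{BLP} (the prime case) applies.

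For the ``if'' direction, in case (i) I would write $G = N \times H$ with $H$ abelian; every irreducible character of $G$ has the form $\chi \otimes \lambda$ with $\chi \in \irr N$ and $\lambda \in \irr H$, and the linear character $\lambda$ never takes the value zero, so the vanishing elements of $G$ correspond bijectively to the vanishing elements of $N$, their $G$-class sizes coinciding with their $N$-class sizes. Since $\cs N = \{1, s\}$ with $s > 1$, the non-abelian group $N$ contains vanishing elements, giving $\vcs G = \{s\}$. For case (ii), I would use the character theory of a group whose central quotient is a Frobenius group: the non-linear irreducible characters of $G$ are (modulo the central factor) induced from $N\zent G$ and hence vanish precisely off $N\zent G$; every element outside $N\zent G$ has centralizer $H\zent G$, yielding the single value $|G\!:\!H\zent G| = |N\zent G/\zent G| = s$. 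The explicit hypothesis that $N$ contains no vanishing element of $G$ then rules out additional vanishing class sizes, so $\vcs G = \{s\}$.

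For the ``only if'' direction, set $\pi = \pi(s)$ and assume $\vcs G = \{s\}$. My \emph{first stage} would be to show that $G$ has a normal Hall $\pi$-subgroup $N$ with abelian Hall $\pi$-complement $H$. Every vanishing element has centralizer of index $s$, a $\pi$-number; combining this with the Isaacs--Navarro--Wolf theorem (non-vanishing elements of a solvable group lie in $\fit G$) yields strong constraints on the class equation. When $s = p^a$ is a prime power, the classical results of Chillag--Herzog and Kazarin on groups with prime-power class sizes deliver solvability and the desired normal Sylow $p$-subgroup with abelian complement. When $s$ is square-free, the analogous theorems on groups with square-free class sizes (Cossey--Wang, and successors) give the same decomposition. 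My \emph{second stage} would then split on whether $[N, H] = 1$: if so, then $G = N \times H$ and reversing the ``if''-direction analysis gives $\vcs N = \{s\}$ and $\cs N \subseteq \{1, s\}$; Ito--Ishikawa then forces $N$ to be a $p$-group up to abelian direct factor, making $s$ a prime power and landing us in case (i). If $[N, H] \neq 1$, I would argue that every $h \in H$ with nontrivial conjugation action on $N$ must be a vanishing element, hence $\cent{G}{h} = H\zent G$ and $|G\!:\!\cent{G}{h}| = s$; this is precisely the Frobenius condition for $G/\zent G$ with kernel $N\zent G/\zent G$ of order $s$, placing us in case (ii).

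The main obstacle I anticipate is the second stage in the case $[N, H] \neq 1$: extracting the full Frobenius structure from the single-value hypothesis $\vcs G = \{s\}$ demands ruling out partial fixed-point behaviour of $H$ on $N$ (elements $h \in H$ that move $N$ but whose contribution is ``absorbed'' by $\zent G$), and simultaneously verifying that $N$ contains no vanishing elements of $G$. I would attack this by induction on $|G|$, passing to a normal quotient on which the BLP characterization of $\vcs{\overline G} = \{p\}$ applies (choosing $p \in \pi$ so that the quotient preserves a single vanishing class size), and exploiting the arithmetic rigidity of $s$ being a prime power or square-free to transfer the Frobenius structure back to $G$. The square-free case may require a separate sub-analysis, distributing the primes of $\pi$ among the Sylow subgroups of $N$ and using that any nontrivial element of $H$ acting nontrivially must act fixed-point freely on the full product $N/\zent N$ rather than merely on one factor.
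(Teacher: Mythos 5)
Your ``if'' direction is essentially sound, but the ``only if'' direction has gaps at both stages. In the first stage, the results you cite (Chillag--Herzog, Kazarin, Cossey--Wang) concern groups in which \emph{all} conjugacy class sizes are prime powers or square-free; here only the \emph{vanishing} class sizes are constrained, and non-vanishing elements may a priori have arbitrary class sizes, so those theorems do not apply. The decomposition $G=NH$ must instead come from a result tailored to vanishing classes --- the paper uses Theorem~A of \cite{DPS} (if no vanishing class size is divisible by a prime $q$, then $G$ has a normal $q$-complement and abelian Sylow $q$-subgroups), which delivers the normal Hall $\pi$-subgroup with abelian complement directly from $\vcs G=\{s\}$, with no prior solvability needed. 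Likewise, invoking ``non-vanishing elements of a solvable group lie in $\fit G$'' at this point is circular: you do not yet know $G$ is solvable, and even for solvable $G$ that statement is open in general (Theorem~D of \cite{wolf} only gives it modulo $2$-power order in $G/\fit G$).

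In the second stage both branches are underpowered. When $[N,H]=1$ you pass from $\vcs N=\{s\}$ to $\cs N\subseteq\{1,s\}$, but that step requires every noncentral element of $N$ to be vanishing, which is known when $N$ is nilpotent (Theorem~B of \cite{wolf}) --- automatic if $s$ is a prime power, since then $N$ is a Sylow subgroup, but when $s$ is square-free with $|\pi|\ge 2$ one must first prove that this case collapses to a $p$-group; the paper's Theorem~\ref{seven} does this with a substantial argument combining Brough's supersolvability theorem, Isaacs' equal-class-size theorem, Cheng's classification of groups all of whose elements have prime order, and a lemma of Ballester-Bolinches--Cossey--Li. When $[N,H]\ne 1$, your induction-plus-\cite{BLP} plan does not go through: the property $|\vcs{\cdot}|=1$ is not inherited by quotients in any controlled way, and \cite{BLP} only covers $s$ prime. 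What is actually needed is (i) Isaacs' theorem on groups with many equal class sizes applied to $G\setminus N\cent HN$, all of whose elements are vanishing by \cite{DPSS}, giving $\cent Nh=\cent NH$ abelian for every $h\in H\setminus\cent HN$; (ii) an argument via Lemma~\ref{nonlinear} and the Glauberman--Isaacs correspondence showing $\cent NH\cap N'=1$, from which one deduces that $N$ contains no vanishing elements of $G$ and that $\cent NH\le\zent N$ (this is where the Frobenius structure of $G/\zent G$ actually comes from, via Corollary~\ref{same size 1}); and (iii) for square-free $s$, a separate proof (Theorem~\ref{supersolvable 1}) that $N$ is nilpotent before any of this machinery can be applied. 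None of these steps has a substitute in your outline, and in particular the claim that $N$ contains no vanishing elements --- which you list as something to ``simultaneously verify'' --- is the hardest part of the paper's Theorem~\ref{nilpotent}, not a byproduct of the Frobenius condition.
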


We note that (a) occurs if and only if \(\pi\) equals the set \(\pi(G/\zent G)\) of prime divisors of \(|G/\zent G|\), whereas (b) occurs if and only if \(\pi\) is strictly contained in that set.

As we see from the above result, the finite groups \(G\) such that \(\vcs G\) consists of a single element \(s\) which is either a prime power or square-free turn out to have a \emph{nilpotent} Hall \(\pi\)-subgroup (where \(\pi\) is the set of prime divisors of \(s\)). In fact, a crucial step in our proof of Theorem~A is to characterize the class of finite groups \(G\) satisfying the condition \(\vcs G=\{s\}\) and having a nilpotent Hall subgroup for the set of prime divisors of \(s\); this is achieved in Theorem~\ref{nilpotent}.

We also remark that there exist groups \(G\) as in conclusion (b) of Theorem~A for which \(N\zent G/\zent G\) is nonabelian, and also groups such that \(G/\zent G\) is a Frobenius group whose kernel \(N\zent G/\zent G\) has prime-power order but \(N\) does contain vanishing elements of \(G\); we will present some examples after the proof of Theorem~\ref{nilpotent}.

\smallskip
As another remark, the groups as in Theorem~A turn out to be nilpotent-by-abelian (a necessary condition for a group in order to be supersolvable) and, in the square-free case, they are in fact supersolvable. One may wonder if the supersolvable groups having a single vanishing class size are necessarily as in (a) and (b) of Theorem~A. 

Actually, if the supersolvable group \(G\) is such that \(\vcs G=\{s\}\), \emph{and the set of prime divisors of \(s\) is strictly smaller than \(\pi(G/\zent G)\)}, then \(G\) is as in conclusion (b) of Theorem~A, and the same holds also replacing the supersolvability assumption with \(G\) having a normal Sylow \(2\)-subgroup (see Theorem~\ref{supersolvable 1}). On the other hand, if $\pi=\pi (G/\zent G)$, then the situation is different, as shown by the following result.

\begin{ThmB}  
Let $G$ be a finite supersolvable group, and assume that $\vcs G = \{ s \}$.  If, denoting by \(\pi\) the set of prime divisors of \(s\), we have $\pi=\pi (G/\zent G)$, then, up to an abelian direct factor of \(\pi'\)-order, one of the following occurs.
\begin{enumeratei}
\item The integer \(s\) is power of a suitable prime \(p\), \(G\) is a \(p\)-group and $\cs G = \{ 1, s \}$.
\item For a suitable prime \(p\) we have $G = N P$, where $N$ is a nontrivial, abelian, normal $p$-complement of \(G\), and $P$ is a Sylow $p$-subgroup of \(G\) such that $|\cs P| = 2$, $\zent P = \cent P N$, and \(P/\zent P\) is an elementary abelian \(p\)-group. Also, $|N:\cent N x|$ has the same value for every element $x \in P \setminus \zent P$, \(\cent N P=N\cap\zent G\), and \(N\) contains no vanishing elements of \(G\).
\end{enumeratei}
\end{ThmB}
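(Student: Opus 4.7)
The plan is to reduce to $G$ being itself a $\pi$-group, and then split by $|\pi|$. Since $G/\zent{G}$ is a $\pi$-group by hypothesis, every $\pi'$-element of $G$ is central, so the (unique) Hall $\pi'$-subgroup $K$ of $G$ lies in $\zent{G}$; centrality of $K$ together with Schur--Zassenhaus produces a Hall $\pi$-complement $H$ with $G = H \times K$. As $K$ is an abelian direct factor of $\pi'$-order, we may replace $G$ by $H$ and assume $G$ is a $\pi$-group (and the hypothesis $\pi = \pi(G/\zent{G})$ is preserved). If $|\pi| = 1$, then $G$ is a $p$-group; since in a $p$-group every non-central element is vanishing (a classical observation), $\vcs{G} = \{s\}$ forces $\cs{G} = \{1, s\}$, which is conclusion (a).

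Assume now $|\pi| \geq 2$, and let $p$ be the smallest prime in $\pi$. The Sylow tower property of supersolvable groups yields a normal Hall $p'$-subgroup $N \neq 1$, so $G = N \rtimes P$ for $P$ a Sylow $p$-subgroup. The $G$-conjugacy class of any $n \in N$ is contained in $N$ and hence has $p'$-size; since $p$ divides $s$, no element of $N$ is vanishing in $G$, giving one of the required assertions of~(b). The crucial further step is to show that $N$ is abelian. Since $(|N|, |G:N|) = 1$, Isaacs' extension theorem guarantees that any $G$-invariant $\psi \in \irr{N}$ extends to some $\chi \in \irr{G}$ with $\chi|_N = \psi$; if such $\psi$ were non-linear then $\psi$ would have a zero by Burnside's theorem, and $\chi$ would vanish on that element of $N$, contradicting what we just proved. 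Hence every $G$-invariant irreducible character of $N$ is linear. To upgrade this to $N' = 1$, one studies the $P$-action on $\irr{N}$ and, using the supersolvability of $N$ (each chief factor cyclic of prime order) together with an inductive argument on $|N|$, shows that any minimal non-abelian quotient of $N$ would produce, via a non-trivial $P$-orbit on a non-linear character, a vanishing element of $G$ inside $N$.

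With $N$ abelian the remaining structural conclusions fall out. For $x \in P \setminus \zent{P}$, since $x$ is a non-central element of the $p$-group $P$, it is vanishing in $P$; pulling back via the surjection $G \to G/N \cong P$ produces $\chi \in \irr{G}$ with $\chi(x) = 0$, so $x$ is vanishing in $G$ and $|x^G| = s$. Because $N$ is abelian and normal, $\cent{G}{x} = \cent{N}{x} \cent{P}{x}$, and the identity $s = |N : \cent{N}{x}| \cdot |P : \cent{P}{x}|$ expresses $s$ as a product of its $p'$-part and $p$-part. Uniqueness of this coprime factorisation forces both factors to be constant as $x$ ranges over $P \setminus \zent{P}$, giving $|\cs{P}| = 2$ and the uniform value of $|N : \cent{N}{x}|$. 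Abelianity of $N$ immediately yields $\cent{N}{P} = N \cap \zent{G}$. The identity $\zent{P} = \cent{P}{N}$ is then verified by excluding both strict inclusions: an $x \in \cent{P}{N} \setminus \zent{P}$ would be non-central in $P$ with $|N : \cent{N}{x}| = 1$, forcing the $p'$-part of $s$ to be $1$ and so $|\pi| = 1$, contradiction; and an $x \in \zent{P} \setminus \cent{P}{N}$ is ruled out using $\pi = \pi(G/\zent{G})$ via centralizer computations on products $xy$ with $y \in P \setminus \zent{P}$. Finally, $P/\zent{P}$ being elementary abelian is obtained by showing $x^p \in \zent{P}$ for all $x \in P$: otherwise $\cent{P}{x} \leq \cent{P}{x^p}$ and equality of class sizes $s_p$ forces $\cent{P}{x} = \cent{P}{x^p}$, from which the uniform centralizer condition produces a contradiction.

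The main obstacle is the proof that $N$ is abelian. The extension-theorem step cleanly eliminates $G$-invariant non-linear characters of $N$, but deducing $N' = 1$ requires combining this with the absence of vanishing elements of $G$ in $N$ and the supersolvable chief-factor structure of $N$; this is where the hypotheses of Theorem~B are used most essentially, and where the argument is most delicate.
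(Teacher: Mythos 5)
There is a genuine gap, and it sits at the foundation of your argument. You assert that ``the $G$-conjugacy class of any $n \in N$ is contained in $N$ and hence has $p'$-size,'' and from this you conclude that $N$ contains no vanishing elements of $G$. The inference is false: a conjugacy class contained in a normal Hall $p'$-subgroup is a subset of $N$, but its cardinality is the index $|G:\cent G n|$, which need not divide $|N|$ and can perfectly well be divisible by $p$. The paper's own example $G=\SL{2,3}$ illustrates this: the elements of order $4$ in the normal $3$-complement $Q_8$ lie in $G$-classes of size $6$, and they are vanishing in $G$. This error is load-bearing, because everything downstream depends on it: you use ``no vanishing elements of $G$ in $N$'' to rule out nonlinear $G$-invariant characters of $N$ and thence to argue $N'=1$, whereas the true logical order is the reverse --- in this setting one can only conclude that $N$ has no vanishing elements \emph{after} one knows $N$ is abelian (then $N=\zent N$ and every class in $N$ really does have $p'$-size). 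In addition, the step that upgrades ``every $G$-invariant irreducible character of $N$ is linear'' to ``$N'=1$'' is only gestured at (``an inductive argument on $|N|$ shows\ldots''), and you never verify that the Sylow subgroup $P$ for your chosen prime (the smallest one in $\pi$) is nonabelian, which conclusion (b) requires.

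For comparison, the paper's proof takes a different route after the (identical) reduction to $\pi(G)=\pi$: it works with $F=\fit G$ rather than with a chosen normal Hall subgroup. Theorem B of Isaacs--Navarro--Wolf shows every element of $G\setminus\zent F$ is vanishing, so all classes outside $\zent F$ (and outside $F$) have size $s$; Isaacs' equal-class-size theorem together with the classification of groups all of whose elements have prime order (Theorem \ref{cheng}) then forces $G/F$ to be a $p$-group of exponent $p$ for a single prime $p$, which simultaneously identifies the correct prime, produces the normal $p$-complement $N$ with $F=N\times\zent P$, and feeds into Theorem \ref{same size} for the centralizer conditions. The abelianness of $N$ then drops out by running the same Cheng-type analysis on $G/\zent F$: if $G/\zent F$ is a $p$-group then $N/\zent N\simeq F/\zent F$ is both a $p$-group and a $p'$-group, hence trivial, and the Frobenius alternative forces $P$ abelian, a contradiction. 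If you want to repair your write-up, you will need to import some version of this Fitting-quotient analysis; the character-theoretic shortcut you propose for $N'=1$ does not get off the ground as stated.
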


When we prove Theorem B in Section~5, we will obtain some additional technical conditions in~(b) that will enable us to have in fact an ``if and only if" theorem (see Theorem~\ref{thmB}).  As we shall then discuss in Section~6, the class of groups as in (b) of Theorem~B is non-empty. Also, similar conclusions can be obtained replacing the assumption of supersolvability of \(G\) with the assumption that \(G\) has a normal Sylow \(2\)-subgroup (see Theorem~\ref{pi(s)=pi(G)}); in that case a third class of (non-supersolvable) groups arises in principle, but at the time of this writing we are not aware of any example of a group as in that class. 

Another question that we leave as an open problem is whether a finite group having a single vanishing conjugacy class size is necessarily solvable. As a first step, we prove in Lemma~\ref{abelian} that such a group does not have any nonabelian minimal normal subgroup, thus, in particular, it is not an almost-simple group. At any rate, our conjecture is that the finite groups having a unique vanishing conjugacy class size are only those described in this paper; we refer the reader to Section~6 for some more details.

To close with, we mention that a key tool in our proofs of Theorem~A and Theorem~B is a result by I.M. Isaacs (see \cite{isaacs}), where the author describes finite groups having a normal subgroup such that all elements outside this subgroup lie in conjugacy classes of the same size; in fact, in Theorem~\ref{same size}, we provide a neat characterization of one particular situation of this kind, when the relevant group has a normal Hall subgroup. 

Throughout the following discussion, every group is assumed to be a finite group.

\section{Preliminary results and notation}

If \(n\) is a positive integer, we denote by \(\pi(n)\) the set of prime divisors of \(n\) and, for a group \(G\), we write \(\pi(G)\) for \(\pi(|G|)\). Also, given a set \(\pi\) of prime numbers, \(n_{\pi}\) and \(n_{\pi'}\) will denote the \(\pi\)- and the \(\pi'\)-part of \(n\), respectively. 

As mentioned in the Introduction, \(\vcs G\) is defined as the set of vanishing conjugacy class sizes of the group \(G\). We will freely use some elementary properties of conjugacy class sizes and of vanishing elements, as, for instance, the fact that if \(N\) is a normal subgroup of \(G\) and \(gN\) is a vanishing element of \(G/N\), then every element in the coset \(gN\) is a vanishing element of \(G\).


We start with some general lemmas.

\begin{lemma} \label{first}
Let $G$ be a group, and let $\pi$ be a set of primes. If every element of $\vcs G$ is divisible only by primes in $\pi$, then the following conclusions hold.
\begin{enumeratei} 
\item $G = NH$, where $N$ is a normal Hall $\pi$-subgroup of $G$ and $H$ is an abelian $\pi$-complement. 
\item \label{conjugates} If \(x\in N\) is a vanishing element of $G$, then \(x\) lies in $\cent N H^y$ for a suitable \(y\in N\).
\end{enumeratei}
\end{lemma}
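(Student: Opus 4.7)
The plan is to establish the Hall decomposition in (a) by assembling normal $q$-complements of $G$ over all primes $q\notin\pi$, and then to deduce (b) from (a) via the conjugacy part of Schur--Zassenhaus.

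For (a), the hypothesis tells us that, for every prime $q\notin\pi$, $q$ divides no element of $\vcs G$. I would invoke the ``vanishing'' refinement of the It\^o--Michler philosophy saying that, under this condition, $G$ admits a normal $q$-complement $H_q$ and an abelian Sylow $q$-subgroup $Q_q$ (such statements are in the spirit of~\cite{BLP}). Set $N:=\bigcap_{q\notin\pi}H_q$. Since each $H_q$ has order $|G|_{q'}$, one has $|N|\mid\gcd_{q\notin\pi}|G|_{q'}=|G|_\pi$; conversely, the diagonal map $G/N\hookrightarrow\prod_{q\notin\pi}G/H_q\cong\prod_{q\notin\pi}Q_q$ shows that $|G:N|\mid|G|_{\pi'}$ and at the same time that $G/N$ is abelian. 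Hence $N$ is a normal Hall $\pi$-subgroup of $G$, and Schur--Zassenhaus furnishes a complement $H$ in $G$ with $H\cong G/N$, automatically abelian.

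For (b), let $x\in N$ be a vanishing element of $G$. Then $|G:\cent G x|$ is a $\pi$-number, so $|\cent G x|_{\pi'}=|H|$, and inside $\cent G x$ the normal $\pi$-subgroup $\cent N x=\cent G x\cap N$ has $\pi'$-index. Applying Schur--Zassenhaus inside $\cent G x$ produces a complement there, which, having order $|H|$, is a $\pi$-complement of $G$. Since $G/N$ is abelian (hence solvable), any two $\pi$-complements of $G$ are conjugate by an element of $N$; in particular this complement equals $H^y$ for some $y\in N$. We conclude $x\in\cent G{H^y}\cap N=\cent N{H^y}$, as required.

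The principal obstacle is the very first step of (a)---converting the class-size hypothesis ``$q$ divides no vanishing class size of $G$'' into the existence of a normal $q$-complement with abelian Sylow $q$-subgroup. Once this structural input from the vanishing-elements literature is in place, the remainder is a clean Schur--Zassenhaus assembly together with elementary counting.
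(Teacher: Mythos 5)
Your proposal is correct and follows essentially the same route as the paper: the key structural input you flag (no vanishing class size divisible by $q$ implies a normal $q$-complement and abelian Sylow $q$-subgroups) is exactly Theorem~A of \cite{DPS} — not \cite{BLP}, which treats a different question — and the rest of your argument (intersecting the normal $q$-complements to build $N$, Schur--Zassenhaus for the abelian complement, and the conjugacy argument inside $\cent G x$ for part (b)) matches the paper's proof, merely spelling out details the paper leaves implicit.
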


\begin{proof}
By Theorem~A of \cite{DPS}, under our assumptions, \(G\) has a normal \(q\)-complement and abelian Sylow \(q\)-subgroups for every prime \(q\) lying in \(\pi(G)\setminus\pi\). Therefore \(G\) has a normal Hall \(\pi\)-subgroup \(N\), and a Hall \(\pi'\)-subgroup \(H\) of \(G\) is nilpotent because it has a normal \(q\)-complement for every prime \(q\) dividing its order; but \(H\) is in fact abelian as all its Sylow subgroups are abelian, and (a) is proved.

As for (b), assume that $x \in N$ is a vanishing element of $G$: by our hypothesis, we know that $|G:\cent G x|$ is divisible only by primes in \(\pi\), and therefore $\cent G x$ contains a conjugate \(H^g\) of the \(\pi\)-complement $H$. Since $H^g$ can be clearly written as \(H^y\) for a suitable element \(y\) of \(N\), we conclude that \(x\) lies in $\cent N H^y$, as claimed.  
\end{proof}



\begin{lemma}\label{hypothesis1}
Let \(G\) be a group, let \(\pi\) be a set of primes, and assume that \(G\) has a normal Hall \(\pi\)-subgroup \(N\). Then, denoting by \(H\) a \(\pi\)-complement of \(G\),  the following conclusions hold.
\begin{enumeratei}
\item We have $\cent N H \cap \zent N = \zent G \cap N$. Furthermore, if $\cent N H$ is abelian, then also the normal core of \(\cent N H\) in \(N\) is $\zent G \cap N$.
\item If \(H\) is abelian, then $\cent H N = \zent G \cap H$.  Furthermore, for \(g\in G\), the conjugacy class size of $g\cent H N$ in \(G/\cent H N\) is the same as the conjugacy class size of \(g\) in \(G\) and, in particular, $\vcs {G/\cent H N} \subseteq \vcs G$.
\item If $\nu \in \irr N$ vanishes on an element $x \in \cent N H$, then $x$ is a vanishing element of $G$.
\end{enumeratei}
\end{lemma}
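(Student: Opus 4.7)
The plan is to handle the three parts in order.

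For the first claim of (a), both inclusions are routine: $\zent G\cap N\sbs\cent N H\cap\zent N$ is clear by the definition of the center, and conversely any element $x$ of $\cent N H\cap\zent N$ centralizes both $N$ (since $x\in\zent N$) and $H$ (since $x\in\cent N H$), hence centralizes $G=NH$ and therefore lies in $\zent G\cap N$.

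For the normal core statement, I would write $K$ for the normal core of $\cent N H$ in $N$, and first observe that $K$ is $H$-invariant: since $H$ normalizes both $N$ and $\cent N H$, conjugation by any $h\in H$ permutes the family $\{\cent N H^n:n\in N\}$ and thus fixes its intersection $K$; so $K\nor G$. The inclusion $\zent G\cap N\sbs K$ is then immediate. For the reverse inclusion it suffices to prove $K\sbs\zent N$, because $K\sbs\cent N H$ already gives $[K,H]=1$. The technical heart of the argument, which I expect to be the main obstacle, is a three-subgroups lemma application to $H$, $K$, $N$: from $[K,H]=1$ one has $[[K,H],N]=1$, and from $K\nor N$ one has $[K,N]\sbs K$, so $[[K,N],H]\sbs[K,H]=1$; the three-subgroups lemma then yields $[[N,H],K]=1$, i.e.\ $[N,H]$ centralizes $K$. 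Combined with the fact that $\cent N H$ centralizes $K$ (this is where the abelianness hypothesis on $\cent N H$ enters, via $K\sbs\cent N H$), the coprime decomposition $N=[N,H]\cent N H$ gives that $N$ centralizes $K$, so $K\sbs\zent N$ as required.

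For (b), the equality $\cent H N=\zent G\cap H$ is obtained by the same strategy: an element $h\in\cent H N$ centralizes $N$ by definition and centralizes $H$ by the assumed abelianness of $H$, hence centralizes $G=NH$. For the class-size statement, set $Z_0=\cent H N\sbs\zent G$; the preimage in $G$ of $\cent{G/Z_0}{gZ_0}$ equals $\{x\in G:[x,g]\in Z_0\}$. Since $G/N\cong H$ is abelian one has $[G,G]\sbs N$, while $Z_0\sbs H$ gives $Z_0\cap N\sbs H\cap N=1$; therefore $[x,g]\in Z_0$ forces $[x,g]\in Z_0\cap N=1$. This shows $\cent{G/Z_0}{gZ_0}=\cent G g\cdot Z_0/Z_0$ and hence the class size of $g$ in $G$ coincides with that of $gZ_0$ in $G/Z_0$; the inclusion $\vcs{G/Z_0}\sbs\vcs G$ then follows by inflating any vanishing irreducible character of $G/Z_0$ to $G$.

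For (c), I would apply Clifford's theorem. Let $\chi$ be any irreducible constituent of $\nu^G$; then $\chi_N=e\sum_{i=1}^t\nu_i$, where the $\nu_i$ range over the distinct $G$-conjugates of $\nu$ in $\irr N$. Because $N$ acts trivially on $\irr N$ by conjugation, the $G$-orbit of $\nu$ coincides with its $H$-orbit, so every $\nu_i$ has the form $\nu^h$ for some $h\in H$. For $x\in\cent N H$ we have $x^h=x$, hence $\nu^h(x)=\nu(h^{-1}xh)=\nu(x)=0$; summing yields $\chi(x)=0$, showing that $x$ is a vanishing element of $G$.
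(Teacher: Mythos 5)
Your proposal is correct, and parts (b) and (c) follow essentially the same route as the paper: the same computation showing that the preimage of $\cent{G/\cent H N}{g\cent H N}$ collapses to $\cent G g$ via $[U,g]\sbs G'\cap\cent H N\sbs N\cap\cent H N=1$, and the same Clifford-theoretic observation that every $G$-conjugate of $\nu$ vanishes at $x\in\cent N H$. The one genuine divergence is in the normal-core statement of (a). The paper computes the normal closure $H^G=[H,N]H$, writes $G=H^G\cent N H$, and observes that $H^G$ centralizes ${\rm Core}_N(\cent N H)=\bigcap_{y\in N}\cent N{H^y}$ directly from that description of the core; abelianness of $\cent N H$ then makes all of $G$ centralize the core. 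You instead stay inside $N$: from $[K,H]=1$ and $[K,N]\sbs K$ the three-subgroups lemma (applied to the triple $N,K,H$) gives $[[N,H],K]=1$, and together with $K\sbs\cent N H$ abelian and the coprime decomposition $N=[N,H]\cent N H$ you conclude $K\sbs\zent N$, after which the first claim of (a) finishes the job. Both arguments hinge on the same two ingredients ($N=[N,H]\cent N H$ and the abelianness hypothesis), but your version replaces the normal-closure bookkeeping with a clean commutator identity and reduces the second claim of (a) to the first, which is arguably tidier; the paper's version has the small advantage of exhibiting explicitly that $G=H^G\cent N H$, a decomposition it reuses in spirit elsewhere. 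Your application of the three-subgroups lemma is correctly set up, and the preliminary observation that $K$ is $H$-invariant, while not strictly needed for that computation, is harmless.
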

\begin{proof}

We start by proving (a). Clearly we have $\zent G \cap N \le \cent N H \cap \zent N$.  On the other hand, $N$ centralizes $\zent N$ and \(H\) centralizes $\cent N H$, so $G=HN$ centralizes $\cent N H \cap \zent N$.  Since $\cent N H \cap \zent N$ is contained in $N$, we obtain $\cent N H \cap \zent N \le \zent G \cap N$, as desired. Observe also that, 
as $\zent G \cap N$ is a normal subgroup of \(N\) contained in $\cent N H$, we get $\zent G \cap N \le {\rm Core}_N (\cent N H)$.  

Now, by coprimality we have $N = [N,H] \cent N H$, whence $G = H[N,H]\cent N H$.  Taking into account that every conjugate of $H$ in \(G\) is in fact a conjugate of \(H\) by an element of \(N\), it follows that the normal closure $H^G$ of \(H\) in \(G\) is  $\langle H^y \mid y \in N \rangle = [H,N]H$.  Thus, $G = HN = H[H,N]\cent N H = H^G \cent N H$.  Since ${\rm Core}_N (\cent N H) = \bigcap_{y \in N} \cent N {H^y}$, we see that $H^y$ centralizes ${\rm Core}_N (\cent N H)$ for each $y \in N$, and therefore $H^G$ centralizes ${\rm Core}_N (\cent N H)$.  Assuming now that \(\cent N H\) is abelian, we clearly get that the whole \(G=H^G \cent N H\) centralizes ${\rm Core}_N (\cent N H)$, which is therefore contained in $\zent G \cap N$. 

\smallskip
We move next to part (b), so we assume that \(H\) is abelian. It is obvious that $\zent G \cap H \le \cent H N$.  On the other hand, $N$ centralizes $\cent H N$ and, since $H$ is abelian, $H$ centralizes $\cent H N$ as well.  Thus $\cent H N$ is central in $HN = G$ and we have $\cent H N = \zent G \cap H$.

As for the remaining claims of (b), since $\cent H N$ lies in \(\zent G\), for \(g\in G\) we have $\cent H N \le \cent G g$. Set $U/\cent H N$ to be the centralizer in $G/\cent H N$ of $g\cent H N$.  It is obvious that $\cent G g \le U$.  On the other hand we have that $[U,g]$ lies in $\cent H N \cap G'$, but $G'$ lies in $N$, so $[U,g]$ is in fact in $\cent H N \cap N=1$.  This implies $U \le \cent G g$, and so, $U = \cent G g$. 
As a consequence, we get \(|G/\cent H N:\cent{G/\cent H N}{g\cent H N}|=|G:\cent G g|\), and the proof of (b) can be easily concluded by observing that if \(g\cent H N\) is a vanishing element of \(G/\cent H N\), then \(g\) is a vanishing element of \(G\). 

\smallskip
Finally, we prove (c). Our assumption is $\nu (x) = 0$, where \(\nu\in\irr N\) and \(x\in\cent N H\). For \(g\) in \(G\), we can write $g^{-1} = hn$ for suitable elements $h \in H$ and $n \in N$, so we get $x^{g^{-1}} = x^{hn} = x^n$, because $x \in \cent N H$. Now, $\nu^g (x)=\nu(x^{g^{-1}}) = \nu (x^n)$ and, since $\nu$ is a class function of \(N\), we have $\nu^g (x) = 0$. As this holds for every element $g \in G$, it follows by Clifford Theory that any irreducible constituent of $\nu^G$ vanishes on~\(x\). 
\end{proof}

As we will see, a situation that turns out to be very relevant in our analysis is when a group $G$ has a normal subgroup $N$ such that all elements of $G\setminus N$ lie in conjugacy classes of the same size. This situation was studied in \cite{isaacs}. 

\begin{theorem}\cite{isaacs} \label{isaacs}
Let $G$ be a nonabelian group with a proper normal subgroup $N$ such that all of the conjugacy class of $G$ which lie outside of $N$ have equal sizes.  Then either $G/N$ is cyclic, or else every nonidentity element of $G/N$ has prime order. In the first situation, $G$ has an abelian Hall $\pi$-subgroup and a normal $\pi$-complement, where $\pi$ is the set of primes dividing the index $|G:N|$.
\end{theorem}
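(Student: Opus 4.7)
Let $m$ denote the common size of the conjugacy classes of $G$ lying outside $N$. A preliminary check shows $\zent G\subseteq N$: if some $z\in\zent G\setminus N$ existed, its class would have size $1$ so $m=1$, forcing $G\setminus N\subseteq\zent G$, whence $G=\zent G\cup N$, which is impossible for a nonabelian group. In particular $m\geq 2$. The fundamental observation is then a centralizer rigidity: for $g\in G\setminus N$ and any integer $k$ with $g^k\notin N$, the inclusion $\cent G g\leq\cent G{g^k}$ is an equality, because both subgroups have order $|G|/m$. Writing $n$ for the order of $gN$ in $G/N$, this yields $\cent G g=\cent G{g^k}$ for every $k$ with $n\nmid k$, and hence $\gen g\subseteq\zent{\cent G g}$.

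For the dichotomy, suppose some coset $gN$ has composite order $n$; the goal is to show $G/N=\gen{gN}$. Choose a prime $p\mid n$ with $p<n$, so $g^p\notin N$ and $\cent G g=\cent G{g^p}$. Given an arbitrary $h\in G\setminus N$, the rigidity restricts the conjugation action of $h$ on the family of powers $g^k$ (all sharing the centralizer $\cent G g$), in particular forcing $h\in\norm G{\cent G g}$. A counting argument comparing the $G$-classes within each coset of $N$, together with the uniform class size $m$, then rules out the possibility that $hN\notin\gen{gN}$, yielding cyclicity of $G/N$. If instead no coset of $N$ has composite order, the second alternative of the dichotomy holds by definition.

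For the Hall structure in the cyclic case, let $G/N=\gen{gN}$ be cyclic of order $n$ and set $\pi=\pi(n)$. Every Hall $\pi'$-subgroup of $G$ is contained in $N$, since $G/N$ is a $\pi$-group; one then shows, using the rigidity and an argument in the spirit of Burnside's normal $p$-complement theorem applied to each $p\in\pi$, that such a Hall $\pi'$-subgroup is in fact normal in $G$, yielding the normal $\pi$-complement $K$. By Schur--Zassenhaus, $G=K\rtimes P$ for a Hall $\pi$-subgroup $P$. The abelianness of $P$ stems from $\gen g\subseteq\zent{\cent G g}$: choosing $P$ to contain the $\pi$-part of a suitable conjugate of $\gen g$, every element of $P$ must commute with this central $\pi$-element, and a short Sylow-theoretic argument upgrades this to commutativity of all of $P$.

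The main obstacle is the dichotomy step: promoting the \emph{local} centralizer rigidity $\cent G{g^k}=\cent G g$ (valid only when $n\nmid k$) into the \emph{global} conclusion that $G/N$ is cyclic. The uniform class size $m$ provides a strong combinatorial constraint, but converting it into a rigorous argument requires careful bookkeeping of how elements outside $\cent G g$ permute both the powers of $g$ and the cosets of $N$, and leveraging the equality of class sizes to preclude any coset from lying outside $\gen{gN}$. Once the dichotomy is established, the Hall structure follows by more routine arguments combining Schur--Zassenhaus with the centrality of $\gen g$ inside $\cent G g$.
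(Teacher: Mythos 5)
You should first be aware that the paper does not prove this statement at all: it is quoted from Isaacs' 1970 article \cite{isaacs} as an external tool, so there is no internal proof to compare yours against. Judged on its own, your write-up is an outline rather than a proof, and the two substantive conclusions are precisely the parts left unestablished. Your preliminary observations are fine: $\zent G\le N$ (a group is never the union of two proper subgroups), and the centralizer rigidity $\cent G g=\cent G{g^k}$ whenever $g^k\notin N$ is correct and is indeed the natural starting point. (The remark that $\gen g\le\zent{\cent G g}$ is true but trivial --- every power of $g$ commutes with everything that commutes with $g$ --- so it is not an extra dividend of the rigidity.)

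The genuine gap is the dichotomy. From the rigidity you assert that an arbitrary $h\in G\setminus N$ is ``forced'' into $\norm G{\cent G g}$; nothing in what precedes supports this. Conjugation by $h$ carries $\cent G g$ onto $\cent G{g^h}$, and $g^h$ need not be a power of $g$, so no normalization follows. The subsequent ``counting argument comparing the $G$-classes within each coset of $N$'' is named but never carried out, and your closing paragraph concedes that converting the local rigidity into cyclicity of $G/N$ is an obstacle you have not overcome --- but that conversion \emph{is} the theorem. The same applies to the last assertion: the normal $\pi$-complement is attributed to ``an argument in the spirit of Burnside's theorem'' and the abelianness of the Hall $\pi$-subgroup to ``a short Sylow-theoretic argument,'' neither of which is supplied; in particular, your commutativity step implicitly needs a Hall $\pi$-subgroup to sit inside $\cent G g$, and no reason for that is given. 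As written, everything beyond the rigidity lemma is a statement of intent, so the proposal does not constitute a proof of the cited result.
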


When we are in the situation of Lemma~\ref{hypothesis1}, we can characterize the groups that satisfy the hypotheses of Theorem~\ref{isaacs}.  Let $\pi$ be a set of primes.  Following the literature, a group $G$ is $\pi$-separable if $G$ has a composition series where the composition factors are all $\pi$-groups or $\pi'$-groups.  It is known that $\pi$-separable groups have Hall $\pi$-subgroups and that every $\pi$-subgroup is contained in some Hall $\pi$-subgroup.

\begin{theorem} \label{same size}
Let \(G\) be a group, let \(\pi\) be a set of primes, and assume that \(G\) has a normal Hall \(\pi\)-subgroup \(N\). Also, denote by \(H\) a \(\pi\)-complement of \(G\).  Then every element of $G$ outside of $N \cent H N$ lies in conjugacy classes of the same size $s$ if and only if the following hold.
\begin{enumeratei}
	\item $|H:\cent H h|$ has the same value for all $h \in H \setminus \cent H N$.
	\item $|N:\cent N h|$ has the same value for all $h \in H \setminus \cent H N$.
	\item $\cent N {\cent H h} = \cent N h$ for every element $h \in H \setminus \cent H N$.  
	\item $\cent N h$ is abelian for every element $h \in H \setminus \cent H N$.
\end{enumeratei} 
In this situation,  $|H:\cent H h|$ is $s_{\pi'}$ and $|N:\cent N h|$ is $s_\pi$  for all $h \in H \setminus \cent H N$.
\end{theorem}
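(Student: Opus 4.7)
The plan is to prove the equivalence by systematically exploiting the $\pi$-decomposition of elements of $G$ together with the explicit centralizer formula $\cent G h = \cent N h \cdot \cent H h$ for $h \in H$, which is readily verified using the uniqueness of the $N$-$H$ factorization in the semidirect product $G = N \rtimes H$. A preliminary observation I will use is that $\cent H N$ itself is normal in $G$: indeed, for $g = mk \in G$ with $m \in N$, $k \in H$, and $c \in \cent H N$, one computes $g^{-1} c g = k^{-1} m^{-1} c m k = k^{-1} c k \in H \cap \cent G N = \cent H N$, because $c$ centralizes $m$ and $\cent G N \nor G$. Consequently $N \cent H N$ is normal in $G$, so $G \setminus N \cent H N$ is a union of $G$-conjugacy classes and the hypothesis is preserved under conjugation.

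For the ``only if'' direction, the first step is to analyse the elements of $H \setminus \cent H N$: the formula above gives $|G:\cent G h| = |N:\cent N h| \cdot |H:\cent H h| = s$, and since these two factors are a $\pi$-number and a $\pi'$-number respectively, they must coincide with $s_\pi$ and $s_{\pi'}$ for every such $h$, delivering (a), (b), and the final numerical claim. Next, for any $n \in \cent N h$ I consider $g = nh$; as $h \notin \cent H N$, we have $g \in G \setminus N \cent H N$, so $g$ also has class size $s$. Since $n \in N$ has $\pi$-order, $h \in H$ has $\pi'$-order and they commute, $n$ and $h$ are the $\pi$- and $\pi'$-parts of $g$, so $\cent G g = \cent G n \cap \cent G h$; from $|\cent G g| = |G|/s = |\cent G h|$ and the inclusion $\cent G g \subseteq \cent G h$, I deduce $\cent G h = \cent G g \subseteq \cent G n$. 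Intersecting this containment with $N$ yields $\cent N h \subseteq \cent N n$ for every $n \in \cent N h$, i.e., $\cent N h$ is abelian---this is (d); intersecting instead with $H$ yields $\cent H h \subseteq \cent H n$, equivalently $n \in \cent N{\cent H h}$, so $\cent N h \subseteq \cent N{\cent H h}$, and the reverse inclusion is immediate from $h \in \cent H h$---this is (c).

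For the ``if'' direction, I take any $g \in G \setminus N \cent H N$; up to $G$-conjugation I can arrange that the $\pi'$-part of $g$ lies in $H$, so $g = n_0 h_0$ with $n_0 \in N$ a $\pi$-element, $h_0 \in H$, $n_0 h_0 = h_0 n_0$, and $h_0 \notin \cent H N$. Conditions (d) and (c) then yield, respectively, $\cent N{h_0} \subseteq \cent N{n_0}$ (because $n_0 \in \cent N{h_0}$, which is abelian) and $\cent H{h_0} \subseteq \cent H{n_0}$ (because $n_0 \in \cent N{h_0} = \cent N{\cent H{h_0}}$). It follows that $\cent G{h_0} = \cent N{h_0} \cdot \cent H{h_0} \subseteq \cent G{n_0}$, whence $\cent G g = \cent G{h_0} \cap \cent G{n_0} = \cent G{h_0}$, and conditions (a)--(b) give $|G:\cent G g| = |N:\cent N{h_0}| \cdot |H:\cent H{h_0}| = s_\pi s_{\pi'} = s$.

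The only delicate point I anticipate is extracting the inclusion $\cent G h \subseteq \cent G n$ (for $n \in \cent N h$) from a mere equality of centralizer orders in the ``only if'' direction; every other step is essentially bookkeeping inside $N \rtimes H$, and once (c) and (d) are on the table they feed back in a completely symmetric fashion to close the equivalence.
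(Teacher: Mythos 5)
Your proof is correct and follows essentially the same route as the paper's: both directions hinge on writing an element outside $N \cent H N$ as a product $nh$ of commuting $\pi$- and $\pi'$-parts and comparing $\cent G {nh} = \cent G h \cap \cent G n$ with $\cent G h$ to extract (c) and (d). The one genuine streamlining is your identity $\cent G h = \cent N h \cent H h$ for $h \in H$ (immediate from uniqueness of the $NH$-factorization), which delivers condition (a) and the values $s_\pi$, $s_{\pi'}$ at once, where the paper instead obtains $|G:\cent G h|_{\pi'} = |H:\cent H h|$ through a $\pi$-separability and Hall-conjugacy argument.
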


\begin{proof}
We first assume that every element of $G$ outside of $N \cent H N$ lies in conjugacy classes of the same size.  Fix an element $h \in H \setminus \cent H N$.  We always have $\cent N{\cent H h} \le \cent N h$.  Suppose $1 \ne n \in \cent N h$.  Then $hn$ is not an element of $N \cent H N$.  Since $h$ is also not an element of $N\cent H N$, we have $|G:\cent G h| = |G:\cent G {hn}|$.  Since $n$ and $h$ commute and have coprime orders, we know that $\cent G {hn} = \cent G h \cap \cent G n$, and we obtain $\cent G {hn} \le \cent G h$.  We have shown that these subgroups have the same indices in $G$, so $\cent G {hn} = \cent G h$.  We now have $\cent H h \le \cent G h = \cent G {hn} \le \cent G n$, and in particular, $n \in \cent N{\cent H h}$.  This yields $\cent N{\cent H h} = \cent N h$, proving (c).    It follows that $\cent N h \le \cent G n$ for every element $n \in \cent N h$.  We observe that $\cent N h$ centralizes all of its elements, and hence, $\cent N h$ is abelian.  This proves (d).

We know that $|G:\cent G h|$ is the class size for elements outside of $N \cent H N$.  Since $N$ is a normal Hall $\pi$-subgroup of $G$, it follows that $\cent N h = \cent G h \cap N$ is the Hall $\pi$-subgroup of $\cent G h$.  This yields $|N:\cent N h| = |G:\cent G h|_{\pi}$, and thus, we have that $|N:\cent Nh|$ will be the same for all elements $h \in H \setminus \cent H N$.   This proves (b).  Notice that $G$ is $\pi$-separable, so there is an element $g \in G$ so that $H^g \cap \cent G h$ is a $\pi$-complement of $\cent G h$ containing $h$.  Since $G = HN$, we see that $H^g = H^m$ for some element $m \in N$.  We now have $h \in H \cap H^m$, and so, $h^{m^{-1}} \in H$.  We obtain $[h,m^{-1}] = h^{-1}h^{m^{-1}} \in H$.  Since $m^{-1} \in N$, we have $[h,m^{-1}] \in N$, and using the fact that $N \cap H = 1$, we deduce that $[h,m^{-1}] = 1$.  Thus, $h$ and $m$ commute, and so, $m \in \cent N h = \cent N{\cent H h}$.  It follows that $\cent {H^m} h = \cent H h$.  Observe that $|G:\cent G h|_{\pi'} = |H^m:\cent {H^m} h| = |H:\cent H h|$, and therefore, this value is the same for all $h \in H \setminus \cent H N$ which proves (a).

Conversely, we assume (a)--(d).  First, consider $h \in H \setminus \cent H N$.  Observe that $\cent N h$ is the Hall $\pi$-subgroup of $\cent G h$.  There exists a $\pi$-complement $H_1$ of $G$ so that $H_1 \cap \cent G h$ is a $\pi$-complement of $\cent G h$ and $h \in H_1$.  Since $H$ and $H_1$ are conjugate and elements in $H \setminus \cent H N$ have the same conjugacy class size in $H$, we have that $|H_1:\cent {H_1} h| = |H:\cent H h|$, and so, $\cent H h$ is a $\pi$-complement of $\cent G h$.  This implies that $\cent G h = \cent N h \cent H h$.  Furthermore, $|G:\cent G h| = |H:\cent H h| |N:\cent N h|$, and by hypothesis, this is the same value for all elements $h \in H \setminus \cent H N$.

Suppose $g \in G \setminus N \cent H N$.  We know that there exist unique elements $g_1,g_2 \in G$ so that $g = g_1 g_2$, $g_1$ has $\pi$-order, $g_2$ has $\pi'$-order, and $g_1$ and $g_2$ commute.  We know $g_1 \in N$ and $g_2$ is conjugate to an element of $H \setminus \cent H N$.  Conjugating, we may assume that $g_1 = n \in N$ and $g_2 = h \in H \setminus \cent H N$.  Since $n$ and $h$ commute and have coprime orders, we have $\cent G g = \cent G h \cap \cent G n$ and $n \in \cent N h$.  This implies that $\cent G g \le \cent G h$.  Observe that $n \in \cent N h = \cent N{\cent H h}$, and so, $\cent H h \le \cent G n$.  Since $\cent N h$ is abelian, and $n \in \cent N h$, we have $\cent N h \le \cent G n$.  It follows that $\cent G h = \cent H h \cent N h \le \cent G n$, hence $\cent G h = \cent G h \cap \cent G n = \cent G g$.  In light of the previous paragraph, we have that $|G:\cent G g|$ is the same for all elements $g \in G \setminus N \cent H N$.
\end{proof}

To close this preliminary section, under the additional assumptions of \(H\) being abelian and not centralizing \(N\) we simplify the above theorem further and derive some more conclusions.

\begin{corollary} \label {same size 1}
Let \(G\) be a group, let \(\pi\) be a set of primes, and assume that \(G\) has a normal Hall \(\pi\)-subgroup \(N\). Also, denoting by \(H\) a \(\pi\)-complement of \(G\), assume that \(H\) is abelian and that \(\cent H N< H\).  Then the following conclusions hold.
\begin{enumeratei}
\item All elements of $G \setminus N \cent H N$ have the same conjugacy class size if and only if $\cent N h = \cent N H$ for all $h \in H \setminus \cent H N$ and $\cent N H$ is abelian. 
\item If the (equivalent) conditions of {\rm{(a)}} hold and \(\cent N H\) is normal in \(N\), then $G/\zent G$ is a Frobenius group with Frobenius kernel $N\zent G/\zent G$.
\end{enumeratei}
\end{corollary}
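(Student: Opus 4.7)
The plan for part~(a) is to apply Theorem~\ref{same size}, exploiting the fact that $\cent H h = H$ for every $h \in H$ since $H$ is abelian. This makes condition (a) of Theorem~\ref{same size} automatic (each $|H:\cent H h|=1$), and reduces condition (c) to the equality $\cent N H = \cent N h$ for every $h \in H \setminus \cent H N$; condition (b) is then a formal consequence of (c), because the centralizers themselves coincide. Finally, condition (d) reads as the abelianness of the common centralizer $\cent N H$. Thus the four equivalences of Theorem~\ref{same size} collapse to exactly the two conditions listed in (a), yielding the desired equivalence.

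For (b), I first identify $\zent G$. Lemma~\ref{hypothesis1}(b) gives $\cent H N = \zent G \cap H$. Lemma~\ref{hypothesis1}(a) states that the normal core of $\cent N H$ in $N$ equals $\zent G \cap N$; the added hypothesis that $\cent N H$ is normal in $N$ forces this core to be $\cent N H$ itself, so $\cent N H = \zent G \cap N$. Coprimality then yields $\zent G = \cent N H \cdot \cent H N$, and in particular $N\zent G = N \cent H N$ and $H\zent G = H \cent N H$. A short computation using $N \cap H = 1$ shows that $N\zent G \cap H\zent G \subseteq \zent G$, so in the quotient $G/\zent G$ the normal subgroup $N\zent G/\zent G$ and the subgroup $H\zent G/\zent G$ intersect trivially and generate the whole group, making $G/\zent G$ their semidirect product. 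Both factors are nontrivial: $\cent H N < H$ is assumed, and it forces also $\cent N H < N$ (otherwise $H$ would centralize $N$).

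The remaining step, which I expect to be the main obstacle, is verifying that the action of $H\zent G/\zent G$ on $N\zent G/\zent G$ is fixed-point-free, giving the Frobenius structure. Let $h \in H \setminus \cent H N$ and $n \in N$, and suppose $[h,n] \in \zent G$. Since $[h,n] \in N$, it lies in $\zent G \cap N = \cent N H$ and hence commutes with $h$. Writing $h^n = h \cdot [h,n]$, the left side has $\pi'$-order (being conjugate to $h$), while the right side is the product of the two commuting elements $h$ (of $\pi'$-order) and $[h,n]$ (of $\pi$-order); the uniqueness of the $\pi$- and $\pi'$-parts of $h^n$ forces $[h,n] = 1$. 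Thus $n \in \cent N h$, and part~(a) gives $\cent N h = \cent N H \subseteq \zent G$. Consequently no nontrivial coset in $H\zent G/\zent G$ centralizes a nontrivial coset in $N\zent G/\zent G$, which is exactly the Frobenius condition with kernel $N\zent G/\zent G$ and complement $H\zent G/\zent G$, completing the proof.
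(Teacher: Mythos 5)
Your proof is correct and follows essentially the same route as the paper: part (a) is the identical specialization of Theorem~\ref{same size} to abelian $H$, and part (b) likewise combines Lemma~\ref{hypothesis1}(a) with the conditions of (a) to deduce that $H\zent G/\zent G$ acts fixed-point-freely on $N\zent G/\zent G$. The only cosmetic difference is that you verify the fixed-point-freeness by a direct commutator and $\pi$-part computation, where the paper invokes the coprime-action identity $\cent {N/\cent N H}{h\cent N H}=\cent N h\cent N H/\cent N H=1$.
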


\begin{proof}
Since $H$ is abelian, in particular we have that $\cent H h = H$ for all elements $h \in H \setminus \cent H N$.  Notice that (a) of Theorem \ref{same size} is trivially met, and \ref{same size}(c) becomes $\cent N h = \cent N H$, and so, \ref{same size}(b) becomes an immediate consequence of \ref{same size}(c).  Hence, in this case, all of the elements in $G \setminus N \cent N H$ have the same class size if and only if $\cent N h = \cent N H$ for every element $h \in H \setminus \cent H N$ and $\cent N H$ is abelian, thus our first claim is proved. 

As for the second claim, assume now that we are in the above situation, and that \(\cent N H\) is normal in \(N\). In light of Lemma \ref{hypothesis1}(a), we have $\cent N H = \zent G \cap N$.  Observe that if $h \in H \setminus \cent H N$, then by (a) we have $\cent {N/\cent N H} {h\cent H N} = \cent {N/\cent N H} {H/\cent H N} = 1$. Hence $H/\cent H N$ acts fixed-point freely on $N/\cent N H$, and the conclusion follows. 
\end{proof}

\section{When the unique vanishing class size is a prime power}

Let $G$ be a group such that \(\vcs G=\{s\}\), where \(s\) is a positive integer whose set of prime divisors we denote by \(\pi\). By Lemma~\ref{first}(a) we know that $G = NH$, where $N$ is a normal Hall $\pi$-subgroup of $G$ and $H$ is an abelian $\pi$-complement. The main result of this section, which is Theorem~\ref{nilpotent}, yields a characterization of a group \(G\) of this kind under the assumption that \(N\) is nilpotent; as an immediate consequence, in Corollary~\ref{prime power} we will prove the part of Theorem~A regarding the case when \(s\) is a prime power.

The following lemma will be a useful tool in our analysis.


\begin{lemma} \label{nonlinear}
Let \(G\) be a group, let \(\pi\) be a set of primes, and assume that \(G\) has a nilpotent normal Hall \(\pi\)-subgroup \(N\). Denoting by \(H\) a \(\pi\)-complement of \(G\), assume also $\cent H N < H$.  If $\theta \in \irr N$ is nonlinear, then $\theta$ vanishes on some element of $N$ that is not conjugate to any element in $\cent N H$.
\end{lemma}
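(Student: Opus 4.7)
The plan is to exploit the nilpotency of \(N\) by writing $N=\prod_{p\in\pi}P_p$ as the internal direct product of its Sylow subgroups and correspondingly $\theta=\prod_p\theta_p$ with each $\theta_p\in\irr{P_p}$. Nonlinearity of \(\theta\) yields a prime \(q\) such that $\theta_q$ is nonlinear. Since each $P_p$ is characteristic in \(N\), it is \(H\)-invariant, hence $\cent N H=\prod_p\cent{P_p}H$; moreover, for an element lying in a single Sylow factor $P_p$, its $N$-conjugates coincide with its $P_p$-conjugates. Finally, every $G$-conjugate of $H$ has the form $H^n$ for some $n\in N$, so the set of elements of $N$ that are $G$-conjugate to some element of $\cent N H$ is exactly $\bigcup_{n\in N}(\cent N H)^n$; the goal is therefore to produce $x\in N$ with $\theta(x)=0$ lying outside this union.

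I would then split into two cases according to whether $H$ centralizes $P_q$. The main case is $\cent{P_q}H<P_q$, in which the problem reduces to the following statement for the \(p\)-group $P_q$: find a zero of the nonlinear character $\theta_q$ which is not $P_q$-conjugate to any element of the proper subgroup $Z:=\cent{P_q}H$. For this I would invoke monomiality of \(p\)-group characters (nilpotent groups are \(M\)-groups, by Taketa), writing $\theta_q=\lambda^{P_q}$ for a linear character $\lambda$ of a proper subgroup $T<P_q$; consequently $\theta_q$ vanishes on the entire complement of $\bigcup_{v\in P_q}T^v$. Now any proper subgroup of a \(p\)-group lies inside a maximal subgroup, and maximal subgroups of a $p$-group are normal of index $p$, so $\bigcup_v T^v$ and $\bigcup_v Z^v$ are each contained in a single maximal subgroup of $P_q$. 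Since the union of any two maximal subgroups of a $p$-group covers strictly fewer than $|P_q|$ elements (an easy count using $2|P_q|/p-|P_q|/p^2<|P_q|$), the desired element $y$ exists.

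In the remaining case $H$ centralizes $P_q$, that is $P_q\subseteq\cent N H$, the hypothesis $\cent H N<H$ forces some other prime $p\in\pi$ to satisfy $\cent{P_p}H<P_p$. Since no finite group is the union of the conjugates of a proper subgroup, there exists $z\in P_p$ which is not $P_p$-conjugate to any element of $\cent{P_p}H$; coupled with any zero $y\in P_q$ of $\theta_q$ (which exists by Burnside's theorem, as $\theta_q$ is nonlinear), the element $x:=yz\in N$ satisfies $\theta(x)=\theta_q(y)\theta_p(z)\prod_{r\neq p,q}\theta_r(1)=0$, and a straightforward check on Sylow components shows that no $N$-conjugate of $x$ can lie in $\cent N H$ (such an inclusion would force $z^{n_p}\in\cent{P_p}H$ for some $n_p\in P_p$, contradicting the choice of $z$). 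The main obstacle is thus the $p$-group reduction in the first case, and the decisive handle for it is the combination of monomiality of $p$-group characters with the elementary fact that the conjugates of two proper subgroups of a $p$-group cannot together cover the whole group.
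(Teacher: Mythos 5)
Your proof is correct, and at its core it runs on the same two facts as the paper's argument: a nonlinear irreducible character of a nilpotent group is induced from a linear character of a proper subgroup, hence vanishes off the union of the conjugates of that subgroup; and the conjugates of two proper subgroups of a nilpotent group cannot cover the group, because each proper subgroup lies inside a proper \emph{normal} (maximal) subgroup. The difference is packaging. The paper works directly in $N$: it writes $\theta=\lambda^N$ with $\lambda$ linear on $X<N$, notes that both normal closures $X^N$ and $(\cent N H)^N$ are proper in $N$ by nilpotency, and concludes that $N\ne X^N\cup(\cent N H)^N$ since no group is the union of two proper subgroups. You first decompose $N$ into its Sylow factors; this buys nothing and costs you a case split, since you must separately handle the degenerate case $P_q\le\cent N H$ by falling back on Burnside's zero theorem and on the fact that no group is covered by the conjugates of a single proper subgroup. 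Your counting estimate $2|P_q|/p-|P_q|/p^2<|P_q|$ is exactly the ``two proper subgroups cannot cover'' fact localized to $P_q$. Both arguments are complete; the paper's is shorter because the normal-closure formulation works uniformly in all of $N$ at once, and the reduction of $N$-conjugacy to conjugacy inside a single Sylow factor (which you verify carefully) simply never needs to arise.
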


\begin{proof}
As we are assuming $\cent H N < H$, we clearly have also $\cent N H < N$.  It follows that the normal closure $\cent N H^N$ is strictly contained in $N$, since $N$ is nilpotent.  Also, again by the nilpotency of $N$, we can find a proper subgroup $X$ of $N$ and a linear character $\lambda \in \irr X$ such that $\lambda^G = \theta$.  Since $X < N$, we have $X^N < N$.  We know that $\theta$ vanishes on every element of $N$ that does not lie in any conjugate of $X$.  In particular, $\theta$ vanishes on every element of $N \setminus X^N$.  If we suppose that $\theta$ does not vanish on any element of $N$ that is not conjugate to $\cent N H$, then every element in $N \setminus X^N$ will be conjugate to $\cent N H$ and will lie in $\cent N H^N$.  This implies that $N = X^N \cup \cent N H^N$.  Since $X^N$ and $\cent N H^N$ are both proper subgroups of $N$, this is a contradiction.  Thus, $\theta$ must vanish on some element not conjugate to $\cent N H$.
\end{proof}

As already mentioned, I.M. Isaacs, G. Navarro, and T.R. Wolf introduced the study of vanishing elements in \cite{wolf}.  In that paper, they prove the following theorem.  As is customary, we denote by \(\fit G\) the Fitting subgroup of the group \(G\).

\begin{theorem}\cite[Theorem B]{wolf} \label{INW}
If $G$ is a supersolvable group, then the nonvanishing elements of $G$ all lie in \(\zent{\fit G}\). In particular, if $G$ is nilpotent, then the nonvanishing elements of $G$ are central.
\end{theorem}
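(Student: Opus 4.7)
The plan is to prove both parts by induction on $|G|$, the main tools being (i) Taketa's theorem that supersolvable groups are monomial (so every $\chi\in\irr G$ has the form $\lambda^G$ for a linear character $\lambda$ of some subgroup $H\le G$, and such a character vanishes on every element not conjugate into $H$), and (ii) the identity $\cent G{\fit G}=\zent{\fit G}$, valid in every solvable group. The ``in particular'' clause follows at once from the main statement since $\fit G=G$ for nilpotent $G$, so the nilpotent case serves both as a special case and as a useful base.

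For the nilpotent case, writing $G$ as a direct product of its Sylow subgroups (and noting that irreducible characters and centers factor correspondingly) reduces the problem to $p$-groups. For a $p$-group $P$ and $g\in P\setminus\zent P$, I would argue by induction on $|P|$: if $P$ admits a nontrivial normal subgroup $N$ with $gN$ non-central in $P/N$, the inductive hypothesis applied to $P/N$ provides a vanishing character that inflates to one of $P$; otherwise $[g,P]$ is contained in every nontrivial normal subgroup of $P$, forcing $P$ to have a unique minimal normal subgroup and a cyclic center, and in this case a faithful irreducible character (or an induced character from a maximal subgroup avoiding $g$) produces the required vanishing.

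For the supersolvable case, let $g\in G$ be nonvanishing. The argument splits into two steps. First, I would show $g\in\fit G$ by using the abelianness of $G/\fit G$ (a feature of supersolvability beyond mere solvability) together with Taketa's theorem: construct a linear character $\lambda$ of $\fit G$ whose $G$-stabilizer equals $\fit G$, so that $\lambda^G$ is irreducible and vanishes off $\fit G$; the nonvanishing hypothesis on $g$ then forces $g\in\fit G$. Second, I would show that $g$ centralizes $\fit G$: if not, $g$ acts nontrivially on some Sylow $p$-subgroup $P$ of $\fit G$ (normal in $G$ by characteristicity in $\fit G$), one selects a linear character $\lambda$ of $P$ not fixed by $g$, and Clifford theory produces an irreducible character $\chi$ of $G$ above $\lambda$ whose value at $g$ can be computed, via a $G$-orbit sum inside $\irr P$, to be zero. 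Combined with $\zent{\fit G}=\cent G{\fit G}$, this yields $g\in\zent{\fit G}$.

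The main obstacle is the second part of the supersolvable step: one cannot simply apply the nilpotent case to $\fit G$, because nonvanishing in $G$ does not restrict to nonvanishing in $\fit G$, the restriction $\chi|_{\fit G}$ being a sum of $G$-conjugate characters in which cancellation can occur at $g$. The crux is therefore to choose the linear character $\lambda$ of the Sylow subgroup $P$ so that $g$ permutes the $G$-orbit of $\lambda$ without fixed points, making the orbit sum at $g$ genuinely vanish; this choice must be made compatibly with the stabilizer constraints imposed by the abelianness of $G/\fit G$, and may require a separate inductive reduction along the chief series of $G$ when such a $\lambda$ is not immediately available.
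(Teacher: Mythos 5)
First, a remark on the comparison itself: the paper offers no proof of this statement --- it is quoted verbatim, with citation, as Theorem~B of Isaacs--Navarro--Wolf \cite{wolf} --- so your attempt can only be measured against the argument in that reference, which proceeds quite differently from your sketch (it works along a normal series using a lemma about characters lying over linear characters of abelian normal subgroups, rather than via regular orbits on $\irr{\fit G}$).

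The decisive gap is in the second step of your supersolvable case. Once you have placed the nonvanishing element $g$ inside $F=\fit G$, you propose to handle $g\notin\zent F$ by choosing a linear character $\lambda$ of the Sylow $p$-subgroup $P=\oh p G$ of $F$ that is \emph{not fixed by $g$}, and then arranging that $g$ permutes the $G$-orbit of $\lambda$ without fixed points. No such $\lambda$ exists: writing $g=g_pg_{p'}$ with $g_p\in P$ and $g_{p'}$ in the (complementary, centralizing) Hall $p'$-part of $F$, conjugation by $g$ acts on $P$ as the inner automorphism induced by $g_p$, and inner automorphisms fix \emph{every} irreducible character of $P$. So $g$ fixes the whole $G$-orbit of $\lambda$ pointwise, and in any case a fixed-point-free permutation of the summands of $\sum_i\lambda^{x_i}(g)$ would not force that sum to vanish --- the actual mechanism must be a cancellation among the values $\lambda(g^{x_i^{-1}})$ on the various conjugates of $g$, which is exactly the point your sketch does not address. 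This is the genuine crux of the theorem (you correctly identify in your last paragraph that nonvanishing does not restrict to $F$, but the fix you propose cannot work), and it is precisely what the machinery of \cite{wolf} is built to overcome.

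Two further points. In the first step, the existence of a linear $\lambda\in\irr{F}$ with $I_G(\lambda)=F$ is a regular-orbit statement for the abelian group $G/F$ acting on $\irr{F/F'}$; it happens to be provable for supersolvable $G$, but only after showing that the Sylow $p$-subgroup of $G/F$ acts faithfully on the $p'$-part of $F$ (otherwise the action concentrates on a non-coprime part, where abelian groups need not have regular orbits on the dual module --- two commuting transvections on $\GF 2^3$ already fix every character), so this step cannot be waved through. In the nilpotent case your reduction is sound, but the final assertion that a faithful irreducible character vanishes at $g$ needs the observation that in the reduced configuration the class of $g$ is the coset $gZ_0$ of the unique minimal normal subgroup $Z_0$, on which the faithful character is a nontrivial multiple of a root of unity; the parenthetical alternative (inducing from a maximal subgroup avoiding $g$) does not in general yield an irreducible character.
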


We are now in a position to prove the main theorem in this section.

\begin{theorem} \label{nilpotent}
Let $G$ be a group, let $s$ be an integer, and let $\pi$ be the set of primes that divide $s$.  Then $\vcs G = \{ s \}$ and $G$ has a nilpotent Hall $\pi$-subgroup if and only if $G = NH$, where $N$ is a normal Hall $\pi$-subgroup and $H$ is an abelian $\pi$-complement, and one of the following occurs.
\begin{enumeratei}
\item The integer \(s\) is a prime power, $G = N \times H$, and ${\rm cs} (N) = \{ 1, s \}$.
\item $G/\zent G$ is a Frobenius group whose Frobenius kernel $N\zent G/\zent G$ has  order $s$, a Frobenius complement is $H\zent G/\zent G$, and $N$ contains no vanishing elements of $G$.
\end{enumeratei}
\end{theorem}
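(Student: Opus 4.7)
I would first dispatch the ``if'' direction by direct computation. In (a), since $G = N \times H$ with $H$ abelian, every $\chi \in \irr G$ decomposes as $\alpha \otimes \beta$ with $\beta$ a nonvanishing linear character, so $\chi(nh) = 0$ iff $\alpha(n) = 0$, and the class size of $nh$ in $G$ equals $|N:\cent N n|$; the hypothesis $\cs N = \{1,s\}$ then forces $\vcs G = \{s\}$. In (b), any $g$ outside $N\zent G$ has $\bar g \in G/\zent G$ conjugate to a non-identity element of the Frobenius complement $H\zent G/\zent G$, and since $H$ is abelian a short check shows $\cent G g$ is the full preimage of $\cent{G/\zent G}{\bar g}$, giving class size $s$; the assumption that $N$ contains no vanishing elements of $G$ then forces $\vcs G = \{s\}$.

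For the ``only if'' direction, Lemma~\ref{first}(a) writes $G = NH$ with $N$ a normal Hall $\pi$-subgroup and $H$ an abelian $\pi$-complement. If $\cent H N = H$, then $G = N \times H$; since $H$ is abelian its characters never vanish, so an element $nh$ is vanishing in $G$ exactly when $n$ is vanishing in $N$, and its $G$-class size is $|N:\cent N n|$. Hence $\vcs N = \{s\}$. As $N$ is nilpotent, Theorem~\ref{INW} puts every nonvanishing element of $N$ into $\zent N$, so $\cs N \setminus \{1\} \subseteq \{s\}$, and the existence of vanishing elements forces $\cs N = \{1,s\}$; the It\^o--Ishikawa theorem on groups of conjugate rank one then makes $N$, up to an abelian direct factor, a $p$-group, so $s$ is a prime power and we land in (a).

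If $\cent H N < H$, the target is (b). The first key step is to show every $g \in G \setminus N \cent H N$ is vanishing in $G$ (hence of class size $s$). Writing $g = nh$ with $h \in H \setminus \cent H N$: since $\langle h \rangle$ is a $\pi'$-group acting nontrivially on the nilpotent $\pi$-group $N$, the standard coprime-action fact that triviality on $N/\frat{N}$ implies triviality on $N$ ensures $h$ acts nontrivially on the elementary abelian quotient $N/\frat{N}$, hence on its dual, producing a linear $\lambda \in \irr N$ with $\lambda^h \ne \lambda$. Then $T = N\, I_H(\lambda)$ is a proper subgroup of $G$ containing $N$, and every irreducible constituent $\chi$ of $\lambda^G$ is induced from $T$. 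A brief calculation using that $H$ is abelian (so that $h^{g'} \in hN$ for every $g' \in G$) shows the $H$-component of each $G$-conjugate of $nh$ in the decomposition $G = NH$ remains $h$, hence outside $I_H(\lambda)$; therefore no $G$-conjugate of $nh$ lies in $T$, and $\chi(nh) = 0$. Corollary~\ref{same size 1}(a) then yields $\cent N h = \cent N H$ for every $h \in H \setminus \cent H N$, $\cent N H$ abelian, and $|N : \cent N H| = s$.

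The crucial, and main, remaining step is to upgrade this to $\cent N H \le \zent N$---equivalently, by Lemma~\ref{hypothesis1}(a), $\cent N H = N \cap \zent G$, and in particular $\cent N H \unlhd N$. I would argue by contradiction: if $x \in \cent N H \setminus \zent N$, then Theorem~\ref{INW} produces $\nu \in \irr N$ with $\nu(x) = 0$, and Lemma~\ref{hypothesis1}(c) lifts this to $x$ being vanishing in $G$. Using $\cent G x = \cent N x \cdot H$ together with $\cent N H \le \cent N x$ (forced by $\cent N H$ abelian and $x \in \cent N H$) and the equal indices $s$, one gets $\cent N x = \cent N H$. Now $\cent G x$ normalizes $\cent N x = \cent N H$, and so does $H$, so $\norm G{\cent N H} \ge \cent G x$; combined with the nilpotency of $N$ (every proper subgroup of $N$ is strictly contained in its normalizer) and the divisibility constraint that $|N : \norm N{\cent N H}|$ divides $|N:\cent N H| = s$, the plan is to drive this to a contradiction and conclude $\cent N H \unlhd N$. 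Once this is in hand, Corollary~\ref{same size 1}(b) delivers the Frobenius structure $G/\zent G$ with kernel $N\zent G/\zent G$ of order $s$ and complement $H\zent G/\zent G$, and Lemma~\ref{first}(b) confines any vanishing element of $G$ lying in $N$ to $(\cent N H)^N = \cent N H \le \zent G$---impossible for a central element---so $N$ contains no vanishing element of $G$, yielding (b). Completing the normality argument, in the face of possibly nonabelian $N$, is the main obstacle.
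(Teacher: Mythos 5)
Your ``if'' direction and the first half of your ``only if'' direction are essentially sound and parallel the paper's (your construction of a linear $\lambda\in\irr N$ with $\lambda^h\neq\lambda$ via the coprime action on $N/\Phi(N)$, followed by the inertia-group computation, is a reasonable self-contained substitute for the paper's citation of Lemma~2.6 of \cite{DPSS}). The genuine gap is exactly where you flag ``the main obstacle'': the normalizer argument for $\cent N H\le\zent N$ does not close. From a vanishing $x\in\cent N H\setminus\zent N$ you correctly obtain $\cent N x=\cent N H$, but the observation that $\cent G x$ and $H$ normalize $\cent N H$ only yields $\norm N{\cent N H}\ge\cent N x=\cent N H$, which is vacuous; nilpotency does give $\norm N{\cent N H}>\cent N H$, but a nilpotent group can contain a non-normal (or non-central) self-centralizing abelian subgroup, and the divisibility constraint you invoke carries no force since $|N:\cent N H|$ is already equal to $s$. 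In fact the most your step produces is this: for $y\in\norm N{\cent N H}\setminus\cent N H$ one gets $1\neq[x,y]=x^{-1}x^y\in\cent N H\cap N'$ -- and that is not a contradiction unless you already know $\cent N H\cap N'=1$. Nothing in your setup establishes that, and it is precisely the point where the hypothesis $\vcs G=\{s\}$ must be brought to bear on the internal structure of $N$.

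The missing input is the paper's character-theoretic step, which runs in the opposite order: one first proves that \emph{$N$ contains no vanishing element of $G$}, and only then deduces $\cent N H\le\zent N$. Concretely: if $\nu\in\irr N$ were nonlinear and $H$-invariant, then by Lemma~\ref{nonlinear} (which uses nilpotency of $N$, so that $\nu$ is induced from a proper subgroup and $N$ is not the union of two proper normal closures) $\nu$ would vanish at some $x\in N$ lying in no conjugate of $\cent N H$; since $\nu$ is then $G$-invariant, $\chi_N=a\nu$ for $\chi\in\irr G$ over $\nu$, making $x$ a vanishing element of $G$ inside $N$ that violates Lemma~\ref{first}(b). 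Hence every $H$-invariant irreducible character of $N$ is linear, so their number is $|\cent N HN'/N'|$, while the Glauberman--Isaacs correspondence says it is $|\cent N H|$; comparing forces $\cent N H\cap N'=1$. Now a vanishing $x\in\cent N H$ gives $\zent N\le\cent N x=\cent N H$, hence $N$ nonabelian, hence $1<\zent N\cap N'\le\cent N H\cap N'$, a contradiction. With ``no vanishing elements of $G$ in $N$'' in hand, your own steps (Theorem~\ref{INW} for the nilpotent $N$ plus Lemma~\ref{hypothesis1}(c)) immediately give $\cent N H\le\zent N$, and then Lemma~\ref{hypothesis1}(a) and Corollary~\ref{same size 1}(b) finish case (b) as you describe. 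Without the invariant-character/Glauberman--Isaacs argument, the proof cannot be completed along the lines you sketch.
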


\begin{proof}
We first suppose that $\vcs G = \{ s \}$, and that \(G\) has a nilpotent Hall \(\pi\)-subgroup.  As already observed, by Lemma \ref{first}(a) we know that $G = NH$, where $N$ is a normal Hall $\pi$-subgroup and $H$ is an abelian $\pi$-complement of \(G\).  If $H = \cent H N$, then $G = H \times N$ and, since $N$ is nilpotent, Theorem~\ref{INW} yields that every element in $N\setminus\zent N$ is vanishing in \(G\).  It follows that ${\rm cs} (N) = \{1\}\cup \vcs G= \{1, s\}$.  We now apply the main result of \cite{ito}, getting that $s$ is a prime power and obtaining conclusion (a).

Assume then $\cent H N < H$. 
Lemma~2.6 of \cite{DPSS} yields that every element of $G \setminus N \cent H N$ is vanishing in \(G\), therefore, by our assumptions, these elements all lie in conjugacy classes of size $s$.  By Corollary~\ref{same size 1}(a) we see that $\cent N H$ is abelian, and we can also apply Lemma \ref{first}(b) to get that every vanishing element of $G$ contained in $N$ lies in some conjugate of $\cent N H$.  

Suppose that $N$ has a nonlinear $H$-invariant irreducible character \(\nu\).  By Lemma \ref{nonlinear}, there is an element $x \in N$ such that $\nu (x) = 0$ and $x$ is not conjugate in $N$ to any element of $\cent N H$.  Consider $\chi \in \irr {G}$ whose restriction to \(N\) has \(\nu\) as a constituent: it follows that $\chi_N = a \nu$ for some positive integer $a$, and so $\chi (x) = a \nu (x) = 0$.  Hence, $\chi$ vanishes on $x$ and we have a contradiction. Our conclusion so far is that $N$ has no nonlinear  $H$-invariant irreducible characters, and therefore the number of \(H\)-invariant irreducible characters of \(N\) is given by \(|\cent{N/N'} H|=|\cent N HN'/N'|\) (here we are using some well-known properties of coprime actions). The Glauberman-Isaacs correspondence, namely, Theorem~(13.1) of \cite{text}, yields now \(|\cent N HN'/N'|=|\cent N H|\), which forces $\cent N H \cap N'$ to be trivial. 

Our aim is now to prove that $N$ does not contain any vanishing element of $G$ and, for a proof by contradiction, we suppose that \(x\in N\) is a vanishing element of $G$: we claim first that $\zent N \le \cent N H$. In fact, by Lemma \ref{first}(b), $x$ lies in a conjugate of $\cent N H$ (by an element of \(N\)) and, replacing \(x\) by a suitable conjugate in \(N\), we may assume $x \in \cent N H$. Recalling that $\cent N H$ is abelian, we thus have $\cent N H \le \cent N x$. Let $h \in H \setminus \cent H N$; by Corollary~\ref{same size 1}(a) we have that $\cent G h = \cent N h H = \cent N H H \le \cent N xH = \cent G x$, but, since $h$ and $x$ are both vanishing elements of $G$, the indices of their centralizers in $G$ are both equal to \(s\).  As a consequence we get $\cent N H = \cent N x$ and, since $\zent N \le \cent N x$, we conclude that $\zent N \le \cent N H$, as claimed. Now, as $\cent N H < N$, this implies that $N$ is not abelian, whence $N' > 1$.  With $N$ being nilpotent, we get $\zent N \cap N' > 1$, and we deduce that $\cent N H \cap N' > 1$, a contradiction.  This proves the claim that $N$ does not contain any vanishing element of $G$.

Note that, if $\cent N H$ is normal in $G$, then, by Lemma \ref{hypothesis1}(a), $\cent N H = \zent G \cap N$.  Also, Corollary~\ref{same size 1}(b) yields that $G/\zent G$ is a Frobenius group whose Frobenius kernel is $N\zent G/\zent G$.  Since $H\zent G/\zent G$ is a Frobenius complement of \(G/\zent G\), for $h \in H \setminus \cent H N$ we have $h^g\zent G = h\zent G$ if and only if $g \in H\zent G\leq\cent GH$, hence $s = |h^G| = |(h\zent G)^{G/\zent G}| = |N\zent G/\zent G|$.

To prove conclusion (b) now, it suffices to show that $\cent N H$ is normal in $N$.  In fact, we show that $\cent N H$ is central in $N$.  Suppose that we can find an element \(x\) in $\cent N H\setminus\zent N$.  We know that every element of $N$ outside of $\zent N$ is a vanishing element of $N$.  Thus, there is a character $\nu \in \irr N$ so that $\nu (x) = 0$.  By Lemma \ref{hypothesis1}(c), $x$ is a vanishing element of $G$ and this contradicts the fact that $N$ does not contain any vanishing element of $G$, concluding our proof. 

\smallskip
Conversely, if \(G\) is as in (a) or (b), then clearly the normal Hall \(\pi\)-subgroup \(N\) of \(G\) is nilpotent. Moreover, if we are in case (a), then we have \(\cs G=\{1,s\}\) and \(\vcs G=\{s\}\) (because \(N\) is nonabelian, and so is \(G\)). On the other hand, suppose we are in case~(b), so, in particular, $N$ contains no vanishing elements of $G$.  By Lemma \ref{hypothesis1}(b) we know that $\cent H N = \zent G \cap H$, and  (using Clifford Theory) it can be checked that $N\zent G = N \times \cent H N$ contains no vanishing elements of $G$.  Therefore, the vanishing elements of \(G\) are (precisely the) elements of $G \setminus N\zent G$, and  
it is not difficult to see that, for an element \(g\) of this kind, the class size of $g\cent H N$ in $G/\cent H N$ is $s$.  Using Lemma \ref{hypothesis1}(b) again, we conclude that $\vcs G = \{ s \}$, as desired.
\end{proof}

As mentioned in the Introduction, there exist groups \(G\) as in conclusion (b) of the above theorem (and of Theorem~A) for which \(N\zent G/\zent G\) is nonabelian. Such a group is, for instance, the normalizer of a Sylow \(2\)-subgroup in the Suzuki group \({\rm{Suz}}(8)\), which is a Frobenius group of order \(2^6\cdot 7\) whose vanishing classes have all size \(2^6\). Moreover, considering now $G = {\rm SL} (2,3)$, we have \(G=NH\) where \(N\) is a normal Sylow \(2\)-subgroup and \(H\) a \(2\)-complement, and $G/\zent G$ is a Frobenius group whose Frobenius kernel \(N/\zent G\) has order $4$; but the set of vanishing class sizes of $G$ contains two numbers, $4$ and $6$, as in fact \(N\) does contain vanishing elements of~\(G\).


We can now handle the case where $s$ is a prime power, which is one part of Theorem~A.

\begin{corollary} \label{prime power}
Let $G$ be a group, let $p$ be a prime, and let $\alpha$ be a positive integer.  Then $\vcs G = \{ p^\alpha \}$ if and only if $G = PH$, where $P$ is a normal Sylow $p$-subgroup and $H$ is an abelian $p$-complement, and one of the following occurs.
\begin{enumeratei}
\item $G = P \times H$, and ${\rm cs} (P) = \{ 1, p^{\alpha} \}$.
\item $G/\zent G$ is a Frobenius group whose Frobenius kernel $P\zent G/\zent G$ has order $p^\alpha$, a Frobenius complement is $H\zent G/\zent G$, and $P$ contains no vanishing elements of $G$.
\end{enumeratei}
\end{corollary}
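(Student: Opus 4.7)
The plan is to obtain Corollary \ref{prime power} as an immediate specialization of Theorem \ref{nilpotent} to the case $s = p^{\alpha}$. When $s$ is a prime power, the set $\pi = \pi(s)$ consists of the single prime $p$, so that a Hall $\pi$-subgroup of $G$ is simply a Sylow $p$-subgroup, which is automatically nilpotent (being a $p$-group). In particular, the hypothesis of Theorem \ref{nilpotent} that $G$ possess a nilpotent Hall $\pi$-subgroup holds for free, and the decomposition $G = NH$ provided there becomes $G = PH$, with $P$ a normal Sylow $p$-subgroup and $H$ an abelian $p$-complement.

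For the forward implication, I would assume $\vcs G = \{p^{\alpha}\}$ and invoke Theorem \ref{nilpotent} directly. Its conclusion (a) or (b) translates verbatim into conclusion (a) or (b) of the corollary, upon identifying $N$ with $P$ and $s$ with $p^{\alpha}$: case (a) gives $G = P \times H$ with $\cs P = \{1, p^{\alpha}\}$, and case (b) gives that $G/\zent G$ is a Frobenius group whose Frobenius kernel $P\zent G/\zent G$ has order $p^{\alpha}$, with complement $H\zent G/\zent G$, and $P$ containing no vanishing elements of $G$.

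For the converse, if $G$ falls under (a) or (b) of the corollary, then setting $N = P$ and $s = p^{\alpha}$ places $G$ in the hypothesis of the converse direction of Theorem \ref{nilpotent}, which yields $\vcs G = \{s\} = \{p^{\alpha}\}$.

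Since this is a direct corollary of Theorem \ref{nilpotent}, there is no substantive obstacle; the only point worth underlining is that the nilpotency hypothesis on the Hall $\pi$-subgroup in Theorem \ref{nilpotent} is automatic when $\pi$ consists of a single prime, so no separate verification of that condition is required.
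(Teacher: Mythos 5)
Your proposal is correct and is exactly the paper's argument: the paper also deduces the corollary by observing that a Sylow $p$-subgroup is automatically nilpotent, so Theorem~\ref{nilpotent} applies directly in both directions with $\pi=\{p\}$ and $N=P$. No further comment is needed.
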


\begin{proof}
As a Sylow \(p\)-subgroup of \(G\) is nilpotent, this is an application of Theorem~\ref{nilpotent}.
\end{proof}

To close this section, we take time to point out an alternate description of groups as in case~(b) of Theorem \ref{nilpotent}. Basically, we see that the Hall \(\pi\)-subgroup of the center of these groups is a direct factor.

\begin{proposition}
Let $G$ be a group as in case~{\rm{(b)}} of Theorem~\(\ref{nilpotent}\).  Then $G = \cent N H \times M$, where $\cent N H$ lies in \(\zent G\), $M = [N,H]H$ is the normal closure of \(H\) in \(G\), and $M/\cent H N$ is a Frobenius group whose Frobenius kernel $[N,H] \cent H N/\cent H N$ contains no vanishing element of $M/\cent H N$. Furthermore, the unique vanishing class size of \(G\) is $|[N,H]|$.
\end{proposition}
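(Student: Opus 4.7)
The plan is to lean on the structural information already extracted in the proof of Theorem~\ref{nilpotent}(b): $\cent N H$ is central in $G$ and contained in $\zent N$; every $H$-invariant irreducible character of $N$ is linear, so by Glauberman--Isaacs $\cent N H \cap N' = 1$; and coprime action gives $N = [N,H]\cent N H$ with $|N:\cent N H| = s$. Everything then reduces to proving that $[N,H] \cap \cent N H = 1$.

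The main obstacle is to establish this intersection being trivial. My plan is to pass to the abelianization $N/N'$ and invoke the standard direct-sum decomposition under coprime action of the $\pi'$-group $H$:
\[ N/N' \;=\; [N,H]N'/N' \;\oplus\; \cent N H N'/N'. \]
This forces $([N,H] \cap \cent N H)\,N'/N' = 1$, so $[N,H] \cap \cent N H \leq N'$; combining with $\cent N H \cap N' = 1$ yields the desired triviality.

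Once this is in hand, $N = [N,H]\times \cent N H$ as an internal direct product (the two factors commute since $\cent N H \leq \zent N$), and hence $G = NH = \cent N H \cdot ([N,H]H) = \cent N H \cdot M$. Since $M \cap N = [N,H]$, one gets $\cent N H \cap M = \cent N H \cap [N,H] = 1$; normality of $M$ in $G$ is routine because $H^n \subseteq H[H,N] = M$ for every $n \in N$, and this also identifies $M$ as the normal closure of $H$. Because $\cent N H \leq \zent G$, the product $G = \cent N H \times M$ is direct.

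Finally, for $M/\cent H N$: the inclusion $[N,H] \hookrightarrow M/\cent H N$ (injective because $[N,H] \cap H = 1$) identifies $[N,H]\cent H N/\cent H N$ with $[N,H]$, and the induced action of $H/\cent H N = H\zent G/\zent G$ agrees with its fixed-point-free action on $N\zent G/\zent G \cong N/\cent N H$; hence $M/\cent H N$ is Frobenius as claimed. A vanishing element of $M/\cent H N$ inside this kernel would pull back to an irreducible character of $M$ trivial on $\cent H N$ vanishing on some $z \in [N,H] \subseteq N$; extending by the trivial character on $\cent N H$ (using $G = \cent N H \times M$) would then yield an irreducible character of $G$ vanishing on $z \in N$, contradicting the absence of vanishing elements in $N$ from case~(b). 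The equality $|[N,H]| = |N:\cent N H| = s$ then identifies the unique vanishing class size of $G$.
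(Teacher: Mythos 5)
Your proposal is correct and follows essentially the same route as the paper: both arguments reduce everything to showing $[N,H]\cap\cent N H=1$ via the coprime decomposition of $N/N'$ together with the fact (imported from the proof of Theorem~\ref{nilpotent}) that $\cent N H\cap N'=1$, and then read off the direct factorization $G=\cent N H\times M$ and the Frobenius structure of $M/\cent H N$. The only cosmetic difference is that the paper identifies $M/\cent H N$ with $G/\zent G$ directly, whereas you re-verify the fixed-point-free action by hand; both are fine.
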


\begin{proof}
We know that $G/\zent G$ is a Frobenius group whose kernel is $N\zent G/\zent G$ and  $H\zent G/\zent G$ is a Frobenius complement.  It follows that $\cent N H \le \zent G$, and so, $\cent N H = \zent G \cap N$ is abelian.   Arguing as in the proof of Theorem \ref{nilpotent}, we have $\cent N H \cap N' = 1$.  By coprimality, and taking into account that \(N/[N,H]\simeq \cent N H/\cent{[N,H]}H\) is abelian, we have $N/N' = \cent N HN'/N' \times [N,H]/N'$.  It follows that $\cent N H \cap [N,H] \le N'$, and so, $\cent N H \cap [N,H]  = \cent N H \cap N' \cap [N,H] = 1$.  This implies that $N = \cent N H \times [N,H]$.  Since $\cent N H$ centralizes $H$, we have $G = \cent N H \times [N,H]H = \cent N H \times M$.  Now, $M/\cent HN \simeq G/\zent G$ is a Frobenius group whose Frobenius kernel $[N,H]\cent HN/\cent HN\simeq[N,H]$ has order equal to the unique vanishing class size of \(G\).  Finally, note that any vanishing element of $M/\cent H N$ lying in $[N,H]\cent HN/\cent HN$ would give rise to a vanishing element of \(G\) lying in $N$, which does not exist by our hypothesis.\end{proof}

\section{When the unique vanishing class size is square-free}

In this section we will complete the proof of Theorem~A, focusing on the situation where $G$ is a group such that $\vcs G$ consists of a single element \(s\) \emph{which is a  square-free number}.   Groups where all of the vanishing class sizes are square-free were considered by J. Brough in \cite{brough}.  In that paper, the following theorem is proved.

\begin{theorem}\cite[Theorem B]{brough} \label{brough}
Let $G$ be a group and suppose that every vanishing conjugacy class size of \(G\) is square-free. Then $G$ is a supersolvable group.
\end{theorem}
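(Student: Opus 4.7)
The plan is to argue by induction on $|G|$, working with a minimal counterexample. The argument splits naturally into a solvability reduction followed by a chief-factor analysis.

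For the solvability reduction, if $G$ has a nonabelian composition factor $S$, the aim is to produce an irreducible character of $S$ that vanishes on some element whose class size in $S$ is divisible by $p^{2}$ for a prime $p$, and to lift this obstruction to a vanishing non-square-free class of $G$ by Clifford theory applied to the almost-simple layer sitting inside $G$. Verifying the existence of such a bad character--element pair uniformly across all nonabelian finite simple groups is where the classification of finite simple groups enters, and is the main obstacle. A CFSG-free route might try to combine Theorem~A of \cite{DPS} with the Isaacs--Navarro--Wolf theorem from \cite{wolf} to constrain the normal structure enough to force solvability, but squeezing out solvability purely from arithmetic conditions on $\vcs G$ looks genuinely difficult.

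Once $G$ is known to be solvable, let $M$ be a minimal normal subgroup; then $M$ is elementary abelian of order $p^{n}$ for some prime $p$. By minimality of the counterexample, $G/M$ is supersolvable, so if $n=1$ then $M$ gives a cyclic chief factor of prime order and $G$ itself is supersolvable; hence we must have $n \ge 2$. Picking a non-principal $\lambda \in \irr M$, extending it to $\psi \in \irr{G_{\lambda}}$, and setting $\chi := \psi^{G} \in \irr G$, Clifford theory ensures that $\chi$ vanishes on every $G$-conjugacy class meeting the complement of $\bigcup_{g} G_{\lambda}^{\,g}$ in $G$, and also on those $m \in M$ where the orbit sum $\sum \lambda^{g}(m)$ cancels. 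One then tracks the $p$-part of $|G:\cent G x|$ for such a vanishing $x$: since the image of $G/M$ in $\GL(M)$ acts irreducibly on an $\mathbb{F}_{p}$-space of dimension $\ge 2$, a careful orbit analysis on $M$ (and dually on $\irr M$) should exhibit at least one vanishing $x$ with $p^{2} \mid |G:\cent G x|$, giving the required contradiction and completing the induction. The delicate point here is that when $G/M$ contains a small $p'$-subgroup acting almost transitively on $M \setminus \{0\}$ the orbit of a single element may carry $p$-power index only $1$, so one has to exploit the orbits on $\irr M$ and the values of $\chi$ on cosets of $M$ in tandem to locate $x$.
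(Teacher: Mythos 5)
First, a point of comparison: the paper does not prove this statement at all --- it is imported verbatim as Theorem~B of \cite{brough}, so the ``paper's own proof'' is simply a citation. What you have written is therefore measured against Brough's actual argument, and it is a plan rather than a proof: both of its load-bearing steps are explicitly left open. The solvability reduction is the first genuine gap. You say the aim is to produce, in every nonabelian composition factor, an irreducible character vanishing on an element whose class size is divisible by $p^{2}$, and to lift this to $G$; but you neither exhibit such a character--element pair (this requires a CFSG-based case analysis over the simple groups, which is precisely the content of the results Brough imports from the literature around \cite{DPS}) nor explain how the lifting controls class sizes --- a vanishing element of a composition factor does not automatically produce a vanishing element of $G$ whose $G$-class size inherits the divisibility by $p^{2}$, since one must pass through a nonabelian minimal normal subgroup $S_1\times\cdots\times S_k$ and use extendability properties of specific characters of the $S_i$. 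Without this step the argument never gets started, and you yourself flag it as ``the main obstacle.''

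The second gap is in the solvable case. Granting that $M$ is elementary abelian of order $p^{n}$ with $n\ge 2$ and $G/M$ supersolvable, you must produce a vanishing element $x$ of $G$ with $p^{2}\mid |G:\cent G x|$. Your construction has two problems. First, a nonprincipal $\lambda\in\irr M$ need not extend to its stabilizer $G_{\lambda}$; one must instead take an irreducible constituent of $\lambda^{G_{\lambda}}$ and invoke the Clifford correspondence. Second, and more seriously, the elements on which $\chi=\psi^{G}$ is guaranteed to vanish lie outside $\bigcup_{g}G_{\lambda}^{\,g}$, and since $M\le G_{\lambda}$ the index $|G:G_{\lambda}|$ divides $|G/M|$; this gives no control over the $p$-part of the class sizes of those elements, which may well be trivial. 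The elements that must actually be analysed are those of $M$ (equivalently, one must exploit the vanishing of $\chi$ \emph{inside} $M$ via cancellation of the orbit sums $\sum_{g}\lambda^{g}(m)$), and locating among them a vanishing element whose class size is divisible by $p^{2}$ is exactly the arithmetic core of the proof --- the ``delicate point'' you name but do not resolve. As it stands, the proposal correctly identifies the shape of the argument (solvability first, then a minimal-counterexample analysis of an abelian chief factor of rank at least two) but establishes neither half.
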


In light of Theorem~\ref{brough}, the groups we study in this section are supersolvable. 

%
%

After two preliminary lemmas and a result from the literature, in Theorem~\ref{seven} we will start our analysis considering the case when \(\pi(s)\) is the whole \(\pi(G)\).

\begin{lemma}\label{lemma 1} 
Suppose the group $G$ is a semidirect product of $K \unlhd G$ and $L \leq G$. If $x$ is a vanishing element of $L$, then $x$ is a vanishing element of $G$ as well. In particular, if $L$ is nilpotent and $x$ lies in \(L\setminus\zent L\), then $x$ is vanishing in $G$.
\end{lemma}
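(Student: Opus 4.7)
The plan is to use the fact that $L$ appears as a quotient of $G$, namely $G/K \simeq L$, and then to lift (inflate) a vanishing irreducible character of $L$ to a vanishing irreducible character of $G$.

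First I would fix an irreducible character $\chi\in\irr L$ with $\chi(x)=0$, which exists by the assumption that $x$ is a vanishing element of $L$. Using the semidirect product decomposition $G=KL$ with $K\cap L=1$, the canonical projection $\pi\colon G\to G/K$ restricts to an isomorphism $L\to G/K$; in particular, $xK$ corresponds to $x$ under this identification. Via inflation, $\chi$ pulls back to an irreducible character $\tilde\chi\in\irr G$ defined by $\tilde\chi(g)=\chi(\pi(g))$ for every $g\in G$. Evaluating at $x\in L\subseteq G$ gives $\tilde\chi(x)=\chi(x)=0$, so $x$ is a vanishing element of $G$. (Equivalently, one could quote the observation recorded in the preliminary section: every element of a coset $gK$, where $gK$ is vanishing in $G/K$, is vanishing in $G$.)

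For the ``in particular'' assertion, assume $L$ is nilpotent and $x\in L\setminus\zent L$. Then Theorem~\ref{INW} (applied to $L$) yields that every nonvanishing element of $L$ lies in $\zent L$; since $x\notin\zent L$, the element $x$ must be vanishing in $L$, and the first part of the lemma delivers the conclusion.

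There is essentially no obstacle here: the argument is a routine application of inflation of characters across the quotient $G/K\simeq L$, combined (for the second assertion) with the Isaacs--Navarro--Wolf result already stated as Theorem~\ref{INW}.
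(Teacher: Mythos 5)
Your proposal is correct and follows essentially the same route as the paper: both identify $L$ with $G/K$, use inflation (the paper phrases it as the preliminary observation that a vanishing coset of $G/K$ consists of vanishing elements of $G$) to conclude $x$ is vanishing in $G$, and then invoke Theorem~\ref{INW} for the nilpotent case. No gaps.
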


\begin{proof}
Set $\overline{G} = G/K \simeq L$, and adopt the bar convention. Then $\overline{x}$ is vanishing in $\overline{G}$, and therefore \(x\) is vanishing in $G$.  By Theorem \ref{INW}, when $L$ is nilpotent all of the elements outside the center of $L$ are vanishing in $L$, and hence they are vanishing in $G$.
\end{proof}

\begin{lemma} \label{prime quotients}
Suppose that $G$ is a group such that $\vcs G$ consists of a unique element \(s\), let \(p\) be a prime divisor of \(s\) and let \(P\) be a Sylow \(p\)-subgroup of \(G\). Also, let $X$ be a normal subgroup of \(G\) with $\zent {\fit G} \le X \le \fit G$, such that every element in $G \setminus X$ is vanishing in \(G\). Then we have $\zent P \le \zent {\fit G}$, thus $G/X$ is a \(p'\)-group if \(P\) is abelian.   
Furthermore, $G/X$ is cyclic of order coprime with $s$ or all elements of $G/X$ have prime order.  
\end{lemma}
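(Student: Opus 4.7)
The plan is to first prove $\zent P \le \zent{\fit G}$ by a short centralizer argument, from which the ``$p'$-group'' assertion for abelian $P$ drops out immediately, and then to invoke Theorem~\ref{isaacs} together with the $\zent P \le \zent{\fit G}$ step (applied to other primes) to get the final dichotomy.

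For the first assertion, pick $y \in \zent P$: since $P \le \cent G y$, the index $|G:\cent G y|$ is coprime to~$p$. Because $p \mid s$, this index cannot equal $s$, so $y$ is not a vanishing element of $G$. The hypothesis on $X$ (contrapositive of ``every element of $G \setminus X$ is vanishing'') then forces $y \in X$, so $\zent P \le X \le \fit G$. Since $\zent P$ is a $p$-group and $\fit G$ is nilpotent, $\zent P$ must lie in the (unique) Sylow $p$-subgroup $\oh{p}{G}$ of $\fit G$. Moreover $\zent P$ is central in the Sylow $p$-subgroup $P$, which contains $\oh p G$, so $\zent P$ centralizes $\oh p G$ and therefore $\zent P \le \zent{\oh p G}$. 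Using the direct-product decomposition $\fit G = \oh p G \times \prod_{q \ne p} \oh q G$, we conclude $\zent P \le \zent{\fit G}$. If in addition $P$ is abelian, then $P = \zent P \le X$, so the Sylow $p$-subgroup of $G/X$ is trivial, meaning $G/X$ is a $p'$-group.

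For the last statement, the case $X = G$ is trivial, so assume $X < G$. Every element of $G \setminus X$ is vanishing and hence has class size $s$, so Theorem~\ref{isaacs} applies (with $X$ in the role of $N$; note $G$ is nonabelian because $\vcs G \ne \emptyset$). We obtain either that every nonidentity element of $G/X$ has prime order, or that $G/X$ is cyclic; in the latter case, Theorem~\ref{isaacs} further guarantees that $G$ has an abelian Hall $\pi$-subgroup, where $\pi$ denotes the set of primes dividing $|G:X|$.

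It remains, in the cyclic case, to rule out that some prime $q \in \pi$ divides $s$. Suppose for contradiction such a $q$ exists; then a Sylow $q$-subgroup $Q$ of $G$, being contained in the abelian Hall $\pi$-subgroup, is abelian. Applying the first part of the lemma with $q$ in place of $p$ (legitimate since $q \mid s$) yields $Q = \zent Q \le \zent{\fit G} \le X$, contradicting $q \in \pi$. Hence $|G:X|$ is coprime to $s$, completing the proof. The only delicate point is the extraction of the ``abelian Hall $\pi$-subgroup'' piece from Theorem~\ref{isaacs}, which is precisely what is needed to loop the first part of the lemma back into the argument for the second.
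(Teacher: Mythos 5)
Your proposal is correct and follows essentially the same route as the paper: the same centralizer argument (class size of a $\zent P$-element is coprime to $p\mid s$, hence nonvanishing, hence in $X\le\fit G$, hence in $\zent{\fit G}$ by nilpotency of the Fitting subgroup), followed by the same application of Theorem~\ref{isaacs} and a loop back to the first part to exclude primes of $s$ from $|G:X|$ in the cyclic case. No gaps.
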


\begin{proof}
Let $x$ be an element of $\zent P$: since \(p\) does not divide \(|G:\cent G x|\), the conjugacy class of \(x\) in \(G\) has a size different from \(s\), and therefore \(x\) is not a vanishing element of \(G\). By our assumptions, \(x\) is then forced to lie in \(X\), thus in \(\fit G\). Clearly \(x\) is centralized by the Hall \(p'\)-subgroup of \(\fit G\) but, as \(x\in\zent P\), it is centralized by the Sylow \(p\)-subgroup of \(\fit G\) as well. In other words, \(x\) lies in \(\zent{\fit G}\) and our first claim follows. 


Now, our hypotheses imply that every conjugacy class of \(G\) lying in \(G\setminus X\) has size \(s\), therefore we are in a position to apply Theorem~\ref{isaacs}: if $G/X$ is cyclic, not of prime order, and \(r\) is a prime divisor of $|G/X|$, then the Sylow \(r\)-subgroups of \(G\) are abelian. In particular, by what proved in the paragraph above, \(r\) doesn't lie in \(\pi(s)\), and we conclude that the order of \(G/X\) is coprime with \(s\). In view of  Theorem~\ref{isaacs}, the only other possibility is that every element of $G/X$ has prime order.
\end{proof}

As we see in Lemma \ref{prime quotients}, groups where all the elements have prime order play a role in our work.  The following theorem classifies these groups.

\begin{theorem}\cite{cheng} \label{cheng}
Let $G$ be a group having all (nontrivial) elements of prime order. Then the following conclusions hold.
\begin{enumeratei}
\item $G$ is nilpotent if and only if $G$ is a $p$-group of exponent $p$.
\item $G$ is solvable and non-nilpotent if and only if $G$ is a Frobenius group with kernel $P \in {\rm Syl}_p(G)$, with $P$ a $p$-group of exponent $p$ and complement $Q \in {\rm Syl}_q(G)$, with $|Q|=q$. 
\item $G$ is nonsolvable if and only if $G$ is isomorphic to the alternating group $A_5$.
\end{enumeratei}
\end{theorem}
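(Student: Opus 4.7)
The plan is to separate the three cases and exploit one core observation from the CP hypothesis: if $x,y$ are commuting nonidentity elements of coprime prime orders $p\neq q$, then $xy$ has composite order $pq$, contradicting CP; hence any two commuting nonidentity elements share a common prime order. For part (a), if $G$ is nilpotent then it decomposes as the internal direct product of its Sylow subgroups, so the observation above forces at most one prime to divide $|G|$; thus $G$ is a $p$-group, and the CP hypothesis then gives exponent $p$. The converse is immediate.

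For part (b), suppose $G$ is solvable and non-nilpotent, and apply (a) to $F = \fit G$: since the CP property is inherited by subgroups and $F$ is nilpotent, $F$ is a $p$-group of exponent $p$ for some prime $p$. Solvability gives $\cent G F \le F$. For any prime $q\neq p$ dividing $|G|$ and any $Q\in\syl q G$, the commuting-coprime obstruction forces $Q$ to act fixed-point-freely on $F$, so $Q$ embeds as a Frobenius complement; its Sylow subgroups are therefore cyclic or generalized quaternion, but exponent $q$ rules out the quaternion case and cuts the cyclic case down to $|Q|=q$. To preclude a third prime $r\mid|G|$, a Sylow $r$-subgroup would likewise have order $r$ and act fixed-point-freely on $F$; combining its nontrivial elements with those of $Q$ then contradicts the CP property on $\gen{Q,R}$. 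One finally identifies $F$ with the full Sylow $p$-subgroup $P$ and concludes that $G = PQ$ is Frobenius of the stated shape, and the converse is a direct verification of orders in such a Frobenius group.

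For part (c), the converse is a direct inspection of $A_5$, whose elements have orders $1,2,3,5$. The forward direction is the deep content of the cited result: one invokes the Feit--Thompson odd-order theorem to conclude that $2\mid|G|$, analyzes the centralizers of involutions to place $G$ within the class of CIT-groups studied by Suzuki, and finally uses either the classification of minimal simple groups or a careful small-prime analysis to identify $G$ with $A_5$. This last step is the main obstacle and the reason one typically quotes the theorem rather than proving it from scratch, since a self-contained proof would require substantial machinery from the classification of finite simple groups, or else the extensive casework carried out in the original paper of Cheng.
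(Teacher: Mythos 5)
First, a point of comparison: the paper does not prove this statement at all --- it is quoted verbatim from \cite{cheng} as a known classification, so your attempt can only be measured against the literature, not against an argument in the text. With that caveat, your part (a), your converse directions, and your core ``commuting elements of coprime prime orders produce an element of composite order'' observation are all correct, and your treatment of (c) is an honest deferral to exactly the kind of machinery (Feit--Thompson, Suzuki's CIT analysis, minimal simple groups) that makes this a theorem one cites rather than reproves; that matches the paper's own practice.

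The forward direction of (b), however, has two genuine gaps. The smaller one: to exclude a third prime you assert that combining nontrivial elements of $Q$ and $R$ ``contradicts the CP property on $\gen{Q,R}$,'' but a $\{q,r\}$-group can perfectly well have all elements of prime order (the nonabelian group of order $qr$ does), so the CP hypothesis alone yields no contradiction there. The correct route is that a Hall $\{q,r\}$-subgroup acts fixed-point-freely on $F=\fit G$ and is therefore a Frobenius complement of order $qr$; since every subgroup of order $qr$ of a Frobenius complement is cyclic, one gets an element of order $qr$ and only then a contradiction. The larger gap is the closing sentence ``one finally identifies $F$ with the full Sylow $p$-subgroup $P$,'' which is asserted without argument and is in fact the hardest step of the solvable case. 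Nothing you have said excludes the configuration in which $G/F$ is a Frobenius group of order $pq$ with kernel $\oh q{G/F}$ of order $q$ and a complement of order $p$ lying above $F$, so that $P/F\cong C_p$ and $P$ is not normal; all of your constraints (each $q$-element acts fixed-point-freely on $F$, $|Q|=q$, $q\equiv 1 \bmod p$) are consistent with it. Ruling it out needs real work: for $x\in P\setminus F$ and $f\in F$ one has $(xf)^p\equiv f^{\,1+x+\cdots+x^{p-1}} \pmod{\Phi(F)}$, and since $\gen{x}$ permutes the nontrivial characters of $\oh q{G/F}$ in orbits of length exactly $p$, the module $F/\Phi(F)$ restricts to a free $\mathbb{F}_p\gen{x}$-module, so $1+x+\cdots+x^{p-1}=(x-1)^{p-1}$ is nonzero on it and some $xf$ has order $p^2$, violating the hypothesis. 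Without an argument of this kind (or an equivalent transfer/counting argument) your proof of (b) is incomplete.
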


We now are ready to handle the case when $s$ is square-free and $\pi(s) = \pi(G)$.  

\begin{theorem} \label{seven}
Let \(G\) be a group, and let \(s\) be a square-free number such that \(\pi(s)=\pi(G)\). If $\vcs G = \{s\}$, then $s$ is a prime, $G$ is an $s$-group, and \(\cs G=\{1,s\}\). \end{theorem}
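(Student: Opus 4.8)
The plan is to analyse the quotient $\bar G := G/\zent{\fit G}$ and show it forces $G$ to be a $p$-group. Since $s$ is square-free, Theorem~\ref{brough} gives that $G$ is supersolvable, so Theorem~\ref{INW} shows that every element outside $\zent{\fit G}$ is vanishing and hence of class size $s$; in particular $G$ is nonabelian, as an abelian group has no vanishing elements. Thus every element outside $X:=\zent{\fit G}$ is vanishing, and I would apply Lemma~\ref{prime quotients} with this $X$. The cyclic alternative it offers is discarded using $\pi(s)=\pi(G)$: there $|\bar G|$ would be coprime to $s$ while $\pi(\bar G)\subseteq\pi(G)=\pi(s)$, forcing $\bar G=1$ and $G$ abelian. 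Hence every nonidentity element of $\bar G$ has prime order, and Theorem~\ref{cheng} (with $G$ solvable) leaves exactly two possibilities: $\bar G$ is a $p$-group of exponent $p$, or $\bar G=\bar P\rtimes\bar Q$ is a Frobenius group with $|\bar Q|$ prime.

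The heart of the argument is to reduce $\pi(G)$ to a single prime. For a prime $r\in\pi(G)$ not dividing $|\bar G|$, the Sylow $r$-subgroup $R=\oh r G$ lies in $\zent{\fit G}$, so $\bar G$ acts on $R$. Here the rigidity $\vcs G=\{s\}$ is decisive: if a nonidentity element of $\bar G$ centralized $R$, a lift $x$ would be a vanishing element of class size $s$ with $r\nmid|G:\cent G x|$, impossible since $r\mid s$; so the action is faithful, and since $s$ is square-free \emph{every} nonidentity element of $\bar G$ fixes a subgroup of index exactly $r$ in $R$. Passing to the coprime, elementary abelian module $R/\Phi(R)$, a short eigenvalue count over $\bar{\mathbb F}_r$ shows that a group acting faithfully, with every nonidentity element fixing a hyperplane, cannot contain $C_p\times C_p$. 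I expect this module-theoretic step to be the main obstacle, since it is where the single-square-free-class-size hypothesis must be converted into an arithmetic contradiction. In the exponent-$p$ case it forces $\bar G$ to have rank $1$, hence to be cyclic of order $p$, contradicting the exclusion of cyclic quotients; so no such $r$ exists, $\pi(G)=\{p\}$, and $G$ is a $p$-group.

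It then remains to rule out the Frobenius case. First I would show $\fit G$ is abelian: otherwise a vanishing $y\in\fit G\setminus\zent{\fit G}$ has $\cent G y\supsetneq\zent{\fit G}$, making $s$ a proper divisor of $|\bar G|$, which—together with $|\bar P|=p$ (forced by square-freeness applied to the $\bar G$-class size of a complement element) and $\pi(\bar G)\subseteq\pi(s)$—is impossible. With $\fit G=\prod_t\oh t G$ abelian, computing $|G:\cent G x|=s$ for suitable lifts $x$ of kernel and of complement elements gives, via a short commutator computation, that $\bar P$ centralizes $\oh t G$ for every $t\ne p$ and $\bar Q$ centralizes $\oh t G$ for every $t\ne q$. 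Any prime $t\notin\{p,q\}$ is then centralized by all of $\bar G$, so $\oh t G\le\zent G$ and $t\notin\pi(s)$, a contradiction; and for the remaining primes, $\cent{\bar G}{\oh p G}$ becomes a normal subgroup of the Frobenius group $\bar G$ containing the complement but not the kernel, which cannot exist. This eliminates the Frobenius case.

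Finally, once $G$ is a $p$-group it is nilpotent, so by Theorem~\ref{INW} its vanishing elements are exactly the noncentral ones and $\cs G=\{1,s\}$; as all class sizes of a $p$-group are powers of $p$ and $s$ is square-free, $s=p$ is prime and $\cs G=\{1,p\}$, as required.
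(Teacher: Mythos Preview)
Your overall framework---using Theorem~\ref{brough} to get supersolvability, Theorem~\ref{INW} to see that all elements outside $\zent{\fit G}$ are vanishing, Lemma~\ref{prime quotients}, and then the Cheng classification---matches the paper exactly. The module-theoretic pseudoreflection argument you propose is essentially equivalent to the paper's appeal to \cite[Lemma~2.4]{babo}. However, there are two genuine gaps.

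\textbf{The exponent-$p$ case.} You conclude that $\bar G=G/\zent F$ is cyclic of order $p$ and say this ``contradicts the exclusion of cyclic quotients.'' But Lemma~\ref{prime quotients} only excludes cyclic quotients \emph{of order coprime to $s$}; a cyclic group of order $p$ (with $p\mid s$) also has all elements of prime order and is not ruled out. The contradiction you need is that $P/\zent P$ is cyclic, forcing $P$ abelian---but $\bar G\cong P/\zent F_p$, not $P/\zent P$, and you have only $\zent P\le\zent F_p$ from Lemma~\ref{prime quotients}. The missing step, which the paper supplies via Lemma~\ref{lemma 1}, is that every element of $P\setminus\zent P$ is vanishing in $G$ (since $G=N\rtimes P$), whereas elements of $\zent F_p$ have $p$-power class size and are therefore nonvanishing; this gives $\zent F_p\le\zent P$, hence $\bar G\cong P/\zent P$, and only then does cyclicity yield the contradiction.

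\textbf{The Frobenius case.} Your claim that ``$\bar P$ centralizes $\oh t G$ for every $t\ne p$ and $\bar Q$ centralizes $\oh t G$ for every $t\ne q$'' is false for $t\notin\{p,q\}$. Take $\bar x\in\bar P\setminus 1$ with lift $x$: since $\cent G x F/F\le\bar P$ has $q$-part and $t$-part equal to $1$, one computes $|C_{\oh t G}(x)|=|\cent G x\cap F|_t=|\cent G x|_t=|\oh t G|/t$, so $x$ does \emph{not} centralize $\oh t G$; the same holds for complement elements. In fact $\bar G$ acts faithfully on $\oh t G$ for every such $t$. Your subsequent deduction that $t\notin\pi(s)$ therefore collapses. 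The paper takes a completely different route here: it shows (using square-freeness) that $F_r$ centralizes $P$, deduces that $F_r$ splits off as a direct factor of the Frobenius kernel $K$, and then applies Lemma~\ref{lemma 1} to force $F_r\le\zent R$; this makes $R/\zent R$ cyclic and hence $R$ abelian, the desired contradiction. Your approach to the case $\pi(G)=\{p,q\}$ via $\cent{\bar G}{\oh p G}$ is sound, but you need a valid argument to reduce to that case first.
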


\begin{proof} 
Let us set \(F=\fit G\). As already observed, \(G\) is supersolvable  by Theorem~\ref{brough}, and Theorem~\ref{INW} yields that every element of \(G\setminus\zent F\) is vanishing in \(G\). Since \(\pi(G)\) consists of the prime divisors of \(s\), clearly \(|G/\zent F|\) (which cannot be \(1\)) is not coprime with \(s\), whence, in view of Lemma~\ref{prime quotients}, every element of \(G/\zent F\) has prime order; moreover, if \(p\) is a prime divisor of \(|G/\zent F|\) and \(P\) is a Sylow \(p\)-subgroup of \(G\), then we have \(\zent P\leq\zent F\) (in fact, \(\zent P\) lies in the Sylow \(p\)-subgroup of \(\zent F\), that we denote by \(\zent F_p\)) and \(P\) is nonabelian. Now, taking into account Theorem~\ref{cheng} and the fact that, \(G\) being supersolvable, \(G/\zent F\) is obviously not isomorphic to \(A_5\), we have to deal with two possible situations: either \(G/\zent F\) is a \(p\)-group of exponent \(p\), or it is a Frobenius group whose order is  divisible by exactly two primes.

\smallskip
Let us assume first that \(G/\zent F\) is a \(p\)-group. If \(G\) itself is a \(p\)-group, then the result follows at once just recalling that \(\cs G=\{1\}\cup \vcs G\) by Theorem~\ref{INW}; therefore, we can assume that there exists a nonempty set \(\{q_1,\ldots,q_k\}\) of primes different from \(p\) such that \(\pi(G)=\{p,q_1,\ldots,q_k\}\). Observe that, if \(Q_i\) is a Sylow \(q_i\)-subgroup of \(G\), then we have \(\zent F=Q_1\times\cdots\times Q_k\times\zent F_p\), and \(G\) is a semidirect product \((Q_1\times\cdots\times Q_k)\rtimes P\). By Lemma~\ref{lemma 1}, every element in \(P\setminus\zent P\) is vanishing in \(G\); since the elements in \(\zent F_p\) lie in conjugacy classes whose sizes are powers of \(p\), they are nonvanishing in \(G\), and therefore we get \(\zent F_p\leq \zent P\) (so equality holds, by what was observed in the previous paragraph). 

Now, consider the conjugation action of \(P\) on \(Q_1\). Observe that no element \(x\) of \(P\setminus \zent P\) centralizes \(Q_1\), as otherwise the conjugacy class of \(x\) in \(G\) would have a size not divisible by \(q_1\), contradicting the fact that \(x\) is a vanishing element of \(G\); as a consequence, \(\overline{P}=P/\zent P\) acts faithfully on \(Q_1\). Furthermore, for every nontrivial \(\overline x\in\overline P\), we see that \(|Q_1:\cent {Q_1} {\overline x}|\) is a divisor (different from \(1\)) of \(|G:\cent G x|\), therefore it is \(q_1\). Since \(Q_1=\cent {Q_1}{\overline x}\times[Q_1,\langle \overline x\rangle]\) by coprimality, we get \(|[Q_1,\langle \overline x\rangle]|=q_1\). We can now appeal to Lemma~2.4 of \cite{babo} to conclude that \(P/\zent P\) is cyclic, which is a contradiction because it would imply that \(P\) is abelian.

\smallskip	
Finally, suppose $G/\zent F$ is a Frobenius group whose order is divisible by exactly two primes \(p\) and \(r\), and whose Frobenius kernel \(K/\zent F\) has index \(r\) in $G/\zent F$ (observe that $F \le K$).  Let $R$ be a Sylow $r$-subgroup of $G$.   Now, $R\zent F/\zent F$ is an abelian Frobenius complement, and thus, $R\zent F/\zent F$ is cyclic.  If we can show that $\zent R = \zent F \cap R$, then we have that $R/\zent R$ is cyclic which implies that $R$ is abelian, and hence, $R = \zent R$ which is a contradiction.

Thus, it suffices to show $\zent R = \zent F \cap R$.  In the first paragraph of the proof, we saw that $\zent R \le \zent F$, so $\zent R \le \zent F \cap R$.  We now work to show $\zent F \cap R \le \zent R$. Labeling the primes in $\pi \setminus \{ p, r \}$ as $q_1, \dots, q_k$,  we can write $F =  Q_1 \times \dots \times Q_k\times F_r\times F_p$ as the direct product of its Sylow subgroups.  Let $P$ be a Sylow $p$-subgroup of $G$, and observe that $K = (Q_1 \times \dots \times Q_k\times F_r) \rtimes P$.  Since $K/\zent F$ is a $p$-group, we have $F_r \le \zent F$ and, in particular, $F_r$ centralizes $F_p$.  Let $x \in P \setminus F_p$.  Using the fact that $G/\zent F$ is a Frobenius group, we have $\cent {G/\zent F}{x\zent F} \le K/\zent F$, and so, $\cent G x \le K$.  On the other hand, as $s$ is square-free, we have $|G:\cent G x|_r = r$.  Because $|G:K| = r$, this implies $F_r \le \cent G x$.  Since $x$ was arbitrary in $P \setminus F_p$ and $F_r$ centralizes $F_p$, this implies that $F_r$ centralizes $P$.  Hence, as clearly \(F_r\) also centralizes the \(Q_i\), we get $K = F_r \times ((Q_1 \times \dots \times Q_k) \rtimes P)$.  Notice that $L = (Q_1 \times \dots \times Q_k) \rtimes P$ will now be normal in $G$, and so we may apply Lemma \ref{lemma 1} to see that every noncentral element of an \(r\)-Sylow $R$ of \(G\) is a vanishing element of $G$.  Since the elements of \(F_r\) are nonvanishing in \(G\) (as their conjugacy class sizes are coprime to \(p\)), this implies $F_r \leq \zent R$.  Since $\zent F \cap R \le F_r$, this yields the result.
\end{proof}

Next, we consider the case where $\pi (s)$ is properly contained in $\pi (G/\zent G)$.  In this case, we make use of the following theorem.

\begin{theorem}\cite[Theorem D]{wolf}\label{ThmD}
If $G$ is a solvable group and $x \in G$ is nonvanishing, then $x \fit G$ has $2$-power order in $G/\fit G$.  In particular, if $x$ has odd order, then $x \in \fit G$.
\end{theorem}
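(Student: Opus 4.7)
The plan is to prove the contrapositive by induction on $|G|$: assuming the image of $x$ in $G/\fit G$ has order divisible by some odd prime $p$, I want to exhibit an irreducible character of $G$ vanishing on $x$. Quotients inherit the existence of a vanishing character (any $\chi \in \irr{G/N}$ with $\chi(xN)=0$ inflates to an element of $\irr G$ vanishing on $x$), and $\fit{G/N} = \fit G/N$ when $N \leq \fit G$ is $G$-normal. Consequently, if $F := \fit G$ is not minimal normal in $G$, induction applied to $G/N$ for a suitable $G$-normal $1 < N < F$ yields the desired character. I may therefore assume $F$ is elementary abelian of order $q^n$ for a single prime $q$. A standard fact for solvable groups with abelian Fitting subgroup gives $\cent G F = F$, so $G/F$ embeds in $\aut F \cong \GL(n,q)$ and $x$ acts nontrivially on $F$. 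Replacing $x$ with its $p$-part (which remains nonvanishing and still lies outside $F$), I may finally assume $x$ is a nontrivial $p$-element.

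If $q \neq p$, then $\langle x\rangle$ acts coprimely on $F$, and by Brauer's permutation lemma the dual action on $\irr F$ is equivalent. There exists $\lambda \in \irr F$ not fixed by $x$, and my goal is to choose $\lambda$ so that no $G$-conjugate of $x$ lies in the inertia subgroup $T := T_G(\lambda)$. Once this is achieved, Clifford correspondence gives $\psi \in \irr T$ above $\lambda$ with $\psi^G \in \irr G$, and the Mackey formula for induced characters yields $(\psi^G)(x)=0$. The needed $\lambda$ is produced by a counting argument comparing $\langle x\rangle$-orbits with $G$-orbits on $\irr F$: the characters fixed by some $G$-conjugate of $x$ cannot exhaust $\irr F$, with the oddness of $p$ being crucial to rule out orbit lengths saturating the count.

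If $q = p$, coprime-action techniques fail, and this is the main obstacle of the proof. Here I would invoke Gajendragadkar's theory of $p$-special and $p'$-special characters in $p$-solvable groups, combined with the self-centralization $\cent G F = F$, to reduce the search for a vanishing character to a smaller subgroup or quotient whose structure is controlled by the $p$-action of $x$ on $F$. A cleaner route is to appeal to the Isaacs character-theoretic machinery for $p$-solvable groups (e.g.\ properties of the $B_{p'}$-characters) to directly force $\chi(x)=0$ for some $\chi \in \irr G$ whenever $x$ is a $p$-singular element with nontrivial image in $G/\fit G$. Overcoming this $q=p$ case is the technical core of the result, and it is here that the solvability hypothesis is exploited most heavily.
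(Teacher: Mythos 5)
This statement is not proved in the paper at all: it is quoted verbatim as Theorem~D of Isaacs--Navarro--Wolf \cite{wolf}, so the only ``proof'' the paper offers is the citation. Your sketch is an attempt to reprove that theorem, and it has genuine gaps. First, the reduction to $F=\fit G$ minimal normal rests on the claim that $\fit{G/N}=\fit G/N$ whenever $N\leq \fit G$ is normal in $G$. This is false: for $G=S_3\times C_2$ and $N$ the normal subgroup of order $3$ (which is proper in $\fit G=C_6$), one has $\fit{G/N}=C_2\times C_2$ while $\fit G/N=C_2$. The identity does hold for $N\leq \frat G$, which is why the actual argument in \cite{wolf} works not with a minimal normal Fitting subgroup but with the faithful completely reducible $G/\fit G$-module $\fit G/\frat G$, which in general is a direct sum of irreducibles in \emph{several} characteristics; passing to a quotient by one summand changes the Fitting subgroup and can destroy the hypothesis that $x\fit G$ has order divisible by $p$. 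So you cannot assume $F$ is elementary abelian for a single prime $q$.

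Second, even granting that reduction, both remaining cases are asserted rather than proved, and they are exactly where the difficulty lies. In the coprime case, the statement you need is: for $p$ odd and $V$ a faithful completely reducible module for the solvable group $G$, there is $\lambda\in\irr F$ fixed by no $G$-conjugate of $x$. A naive count fails, since $|x^G|\cdot|\cent V x|$ can exceed $|V|$ when $x$ has a large fixed-point space (e.g.\ a codimension-one eigenspace), so ``the characters fixed by some conjugate of $x$ cannot exhaust $\irr F$'' is precisely the nontrivial content; in \cite{wolf} it is established by a large-orbit theorem whose proof uses the structure theory of solvable linear groups (quasi-primitive reductions, extraspecial normal subgroups, etc.), and the hypothesis $p\neq 2$ enters there in an essential way ($S_3$ acting on $C_3$ already shows the statement fails for $p=2$). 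In the non-coprime case $q=p$ you explicitly do not give an argument, only a pointer to $\pi$-special and $B_{p'}$-character machinery with no indication of how it would produce a vanishing character; note that the module-theoretic theorem of \cite{wolf} is stated for completely reducible modules of arbitrary (mixed) characteristic precisely so that this case needs no separate treatment. A smaller point: replacing $x$ by its $p$-part $x_p$ is legitimate only because a character induced from a subgroup avoiding all conjugates of $x_p$ also vanishes on $x$; the claim that $x_p$ ``remains nonvanishing'' is neither justified nor what is needed. As it stands, the proposal reproduces the easy formal reductions but leaves the substance of Theorem~D unproved.
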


As we will see, in this situation we do not need to assume that $s$ is square-free to obtain the desired conclusion; we only need to assume that $G$ is supersolvable.  Also, the argument when $G$ has odd order is very similar in this case, so we have included it.

\begin{theorem} \label{supersolvable 1}
Let \(G\) be a group, let \(s\) be an integer, and let \(\pi\) be the set of primes that divide \(s\). Assume that \(\pi\) is strictly contained in \(\pi(G/\zent G)\), and that $G$ is supersolvable or $G/\fit G$ has odd order. If ${\rm vcs}(G) = \{s\}$, then \(G\) has a nilpotent Hall \(\pi\)-subgroup.
\end{theorem}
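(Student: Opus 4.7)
The plan is to prove $N \le \fit G$, which will make $N$ nilpotent as the Hall $\pi$-subgroup of a nilpotent group. Using Lemma~\ref{first}(a), I write $G = NH$ with $N$ the normal Hall $\pi$-subgroup and $H$ an abelian $\pi$-complement, and choose $X = \zent{\fit G}$ in the supersolvable case (by Theorem~\ref{INW}) or $X = \fit G$ in the odd-order case (by Theorem~\ref{ThmD}: any nonvanishing element would have $2$-power image in the odd-order quotient $G/\fit G$, hence would lie in $\fit G$). In both cases one checks $\zent G \le X$ and every element of $G \setminus X$ is vanishing in $G$, so $\pi(G/\zent G) = \pi(G/X) \cup \pi(X/\zent G)$.

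The backbone of the argument is Lemma~\ref{prime quotients}, which yields two alternatives. If $G/X$ is cyclic of $\pi'$-order, then $|G:X|$ is coprime to $s$, forcing the unique normal Hall $\pi$-subgroup $N$ inside $X \le \fit G$. Otherwise every nontrivial element of $G/X$ has prime order, and by solvability of $G$ together with Theorem~\ref{cheng}, $G/X$ is either a $p$-group of exponent $p$ or a Frobenius group of order $pq$. If $\pi(G/X) \cap \pi = \emptyset$, the same conclusion $N \le X$ holds.

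The remaining possibility---some prime of $G/X$ lying in $\pi$---is what the hypothesis $\pi \subsetneq \pi(G/\zent G)$ is meant to rule out. Fixing $r \in \pi(G/\zent G) \setminus \pi$ and a Sylow $r$-subgroup $R \le H$ with $R \not\le \zent G$, any $h \in R \setminus \zent G$ lies in $H \setminus \cent H N$ by Lemma~\ref{hypothesis1}(b), so in particular $\cent H N < H$. Then Lemma~2.6 of \cite{DPSS} implies every element of $G \setminus N\cent H N$ is vanishing with class size $s$, and Corollary~\ref{same size 1}(a) yields that $\cent N H$ is abelian with $\cent N h = \cent N H$ for each such $h$. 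The hard part will be the case-by-case analysis that follows: combining these centralizer equalities with the restricted structure of $G/X$ (a $p$-group of exponent $p$ with $p \in \pi$, or a Frobenius group of order $pq$ with $p$ or $q$ in $\pi$), the noncentrality of $R$, and coprime actions between $R$ and $p$-elements lying outside $X$, a detailed bookkeeping of centralizers and Sylow intersections is expected to produce a vanishing class of size different from $s$, contradicting $\vcs G = \{s\}$ and completing the proof.
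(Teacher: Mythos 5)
Your setup follows the paper's: the decomposition $G=NH$ from Lemma~\ref{first}(a), the choice of a normal subgroup $X$ with every element of $G\setminus X$ vanishing, Lemma~\ref{prime quotients}, and the classification of Theorem~\ref{cheng}. The two easy alternatives ($G/X$ cyclic of $\pi'$-order, or $\pi(G/X)\cap\pi=\emptyset$) are handled correctly. But your argument stops exactly where the theorem is actually proved: the case in which some prime of $G/X$ lies in $\pi$ is dispatched with ``a detailed bookkeeping \dots\ is expected to produce a vanishing class of size different from $s$.'' That case is the entire content of the statement, and no argument is given for it. Moreover, the route you sketch (Lemma~2.6 of \cite{DPSS}, Corollary~\ref{same size 1}(a), ``coprime actions'') is not the one that closes it, and you misquote Theorem~\ref{cheng}: in its case (b) the Frobenius kernel is a $p$-group of exponent $p$ of arbitrary order, not a group ``of order $pq$,'' so the structure you would be bookkeeping against is richer than you state.

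The paper's endgame is short and different. It first passes to $G/\cent HN$ (legitimate by Lemma~\ref{hypothesis1}(b), which also gives $\cent HN=\zent G\cap H<H$), so one may assume $\cent HN=1$ and hence $F=\fit G\le N$; this containment is what makes everything work. If $G/F$ is a $p$-group of exponent $p$, then $G/N$ (a $\pi'$-group) and $N/F$ (a $\pi$-group) have coprime orders inside the $p$-group $G/F$, so one of them is trivial: $G=N$ contradicts $\pi\subsetneq\pi(G/\zent G)$, and $N=F$ gives nilpotency --- no vanishing-class computation is needed in this branch at all. If $G/F$ is Frobenius with kernel $K/F$ a $p$-group of exponent $p$ of index $r$, then $N/F$ must be the whole kernel because $G/N\simeq H$ is abelian, whence $r\notin\pi$; and if $F<N$, any $n\in N\setminus F$ satisfies $\cent Gn\le K$, so $r$ divides $|G:\cent Gn|=s$, contradicting $r\notin\pi(s)$. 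You should adopt the reduction to $\cent HN=1$ and supply these two arguments; as written, your proposal establishes the framework but not the theorem.
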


\begin{proof}
By Lemma \ref{first}(a) we know that $G = NH$, where $N$ is a normal Hall $\pi$-subgroup of $G$ and $H$ is an abelian $\pi$-complement of $G$.  Also, by Lemma \ref{hypothesis1}(b), we have $\cent H N = \zent G \cap H$.  
The fact that $\pi$ is strictly contained in $\pi (G/\zent G)$ implies that $H\zent G/\zent G$ is nontrivial, so $\cent H N = \zent G\cap H < H$. Now, the factor group \(G/\cent H N\) (whose center is \(\zent G/\cent H N\)) satisfies the hypotheses of our statement by Lemma~\ref{hypothesis1}(b), and, if we prove the claim for \(G/\cent H N\), then we clearly get the desired conclusion for \(G\) as well. As a consequence, there is no loss of generality in assuming $\cent H N=1$, thus the Fitting subgroup \(F\) of $G$ lies in $N$.  Note that, when $G$ is supersolvable, all elements in $G\setminus\zent F$ are vanishing in $G$ by Theorem~\ref{INW}; but by Theorem \ref{ThmD}, all elements in $G\setminus F$ are vanishing in $G$ if $G/F$ has odd order. Therefore, under our assumptions, in any case we have that all of the elements of \(G\) lying outside \(F\) are vanishing in \(G\).




We now appeal to Lemma \ref{prime quotients}:  if $G/F$ is cyclic and not of prime order, then $|G/F|$ is not divisible by any prime in $\pi$.  This implies $F = N$, and the conclusion follows. Thus, again by Lemma \ref{prime quotients}, we may assume that all nonidentity elements of $G/F$ are of prime order and we can use the classification given in Theorem \ref{cheng}, taking into account that $G/F$ is obviously not isomorphic to $A_5$.

Suppose $G/F$ is a $p$-group of exponent $p$ for some prime $p$. Since $G/N$ and $N/F$ have coprime orders, we must have either $G = N$  or $N = F$. But if $G = N$, then we have a contradiction to the fact that $\pi$ is strictly contained in $\pi(G/\zent G)$.  Thus, we must have $F=N$, and again we are done. 

Finally, we consider the case in Theorem \ref{cheng} when $G/F$ is a Frobenius group whose Frobenius kernel is a $p$-group with exponent $p$ for some prime $p$ and has index $r$ for a different prime $r$.  We know that every normal subgroup of $G/F$ either contains or is contained in the Frobenius kernel of $G/F$.  Since the Frobenius kernel has index $r$, and $N$ is proper in $G$, we see that $N/F$ is contained in the Frobenius kernel.  Now, if $N/F$ were proper in the Frobenius kernel, then $H\simeq G/N$ would be nonabelian, which is not the case. Thus $N/F$ is in fact the Frobenius kernel of $G/F$, and it follows that $r$ is not in $\pi$.
If, arguing by contradiction, we have $F < N$, then we can choose $n \in N \setminus F$.  Observe that $\cent G nF/F \le \cent {G/F} {nF} \le N/F$.  This implies that $r$ divides $|G:\cent G n|$.  Since $n \in N \setminus F$, we see that $n$ is a vanishing element of $G$, and so, $s = |G:\cent G n|$.  This contradicts the fact that, as observed above, $r$ does not divide $s$, and the proof is complete. 
\end{proof}



The following corollary will complete the proof of Theorem~A.

\begin{corollary} 
Let \(G\) be a group, and let \(s\) be an integer that is square-free. Then, denoting by \(\pi\) the set of prime divisors of \(s\), we have \(\vcs G=\{s\}\) if and only if \(G=NH\), where \(N\) is a normal Hall \(\pi\)-subgroup and \(H\) an abelian \(\pi\)-complement of \(G\), and one of the following occurs.
\begin{enumeratei}
\item The integer \(s\) is a prime, $G = N \times H$, and \(\cs N=\{1,s\}\).
\item $G/\zent G$ is a Frobenius group whose Frobenius kernel \(N\zent G/\zent G\) has order $s$, a Frobenius complement is \(H\zent G/H\), and \(N\) contains no vanishing elements of \(G\).
\end{enumeratei}
\end{corollary}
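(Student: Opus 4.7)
My plan is to derive this corollary from the nilpotent-Hall-subgroup characterization of Theorem~\ref{nilpotent} by showing that, in the square-free setting, the Hall \(\pi\)-subgroup \(N\) is automatically nilpotent. For the forward direction I first invoke Theorem~\ref{brough} to conclude that \(G\) is supersolvable, and Lemma~\ref{first}(a) to write \(G = NH\) with \(N\) a normal Hall \(\pi\)-subgroup and \(H\) an abelian \(\pi\)-complement. The remaining task is to establish nilpotency of \(N\) and to identify which case of Theorem~\ref{nilpotent} occurs; the decisive dichotomy is whether \(\pi\) is strictly contained in \(\pi(G/\zent G)\) or coincides with it.

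In the first sub-case, Theorem~\ref{supersolvable 1} applies directly (the required supersolvability of \(G\) is already in hand) and yields the nilpotency of \(N\). The subsequent application of Theorem~\ref{nilpotent} must land in case~(b) of that theorem, since case~(a) there would force \(G = N \times H\), whence \(\pi(G/\zent G) \subseteq \pi(N) = \pi\), contradicting the strict containment. This gives precisely conclusion~(b) of the corollary.

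The sub-case \(\pi = \pi(G/\zent G)\) is the one in which I plan to call upon Theorem~\ref{seven}. Here any prime \(q \in \pi(G) \setminus \pi\) satisfies \(q \nmid |G/\zent G|\), so each such Sylow \(q\)-subgroup of \(G\) is central; collecting these (and using the abelianness of \(H\) to ensure they are actual direct factors), I obtain a decomposition \(G = A \times G_0\) where \(A \le \zent G\) and \(\pi(G_0) = \pi\). Since \(A\) is a central direct factor, vanishing classes of \(G\) correspond bijectively and size-preservingly to vanishing classes of \(G_0\), so \(\vcs{G_0} = \{s\}\) with \(\pi(s) = \pi(G_0)\). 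Theorem~\ref{seven} then forces \(s\) to be a prime and \(G_0\) to be an \(s\)-group with \(\cs{G_0} = \{1, s\}\); identifying \(G_0\) with \(N\) and absorbing \(A\) into \(H\) produces conclusion~(a). The converse direction is immediate: conclusions~(a) and~(b) of the corollary are the square-free specializations of cases~(a) and~(b) of Theorem~\ref{nilpotent} (in~(a), \(s\) prime is a prime power with exponent one), so that theorem supplies \(\vcs G = \{s\}\) at once.

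I expect the main obstacle to be the second sub-case: one must verify carefully that the Sylow subgroups at primes outside \(\pi\) actually peel off as a central direct factor and that the transfer of \(\vcs G = \{s\}\) from \(G\) to the \(\pi\)-part \(G_0\) preserves both the single class size and the equality \(\pi(s) = \pi(G_0)\). Once this clean decomposition is in place, Theorem~\ref{seven} closes the argument at once.
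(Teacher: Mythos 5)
Your proposal is correct and follows essentially the same route as the paper: Theorem~\ref{brough} for supersolvability, Lemma~\ref{first}(a) for the decomposition $G=NH$, a dichotomy on whether $\pi=\pi(G/\zent G)$ (equivalent, via Lemma~\ref{hypothesis1}(b), to the paper's dichotomy $\cent H N=H$ versus $\cent H N<H$), with Theorem~\ref{seven} handling the equality case after splitting off the central $\pi'$-part $H$, and Theorem~\ref{supersolvable 1} plus Theorem~\ref{nilpotent} handling the strict-containment case; the converse is likewise read off from the ``if part'' of Theorem~\ref{nilpotent}. The extra care you anticipate in the equality case reduces to observing $\cent H N=\zent G\cap H=H$, which is exactly how the paper disposes of it.
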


\begin{proof} 
Assuming \(\vcs G=\{s\}\), we have that $G$ is supersolvable by Theorem~\ref{brough}.  By Lemma~\ref{first}(a), we know that $G = NH$ where $N$ is the normal Hall $\pi$-subgroup of $G$ and $H$ is an abelian $\pi$-complement.  If $\cent H N = H$, then $G = N \times H$, and $\vcs G = \vcs N$, so we get conclusion~(a) by Theorem~\ref{seven}. On the other hand, if $\cent N H < H$, then we use Lemma \ref{hypothesis1}(b) to see that $\cent H N = \zent G \cap H$ and conclude that $\pi$ is properly contained in $\pi (G/Z (G))$. Now, by Theorem~\ref{supersolvable 1} we have that \(N\) is nilpotent, and conclusion (b) follows by Theorem~\ref{nilpotent}. As for the converse statement, this is the ``if part" of Theorem~\ref{nilpotent}.
\end{proof}

\section{Theorem B}

The aim of this section is to prove Theorem~B, and also to conclude our description (started in Theorem~\ref{supersolvable 1}) of groups having a unique vanishing conjugacy class size and a normal Sylow \(2\)-subgroup. 

\begin{theorem} \label{pi(s)=pi(G)}
Let $G$ be a group and \(s\) a positive integer such that \(\pi(s)=\pi(G)\). Assume that  \(G\) is supersolvable or \(|G:\fit G|\) is odd. If $\vcs G = \{ s \}$, then one of the following occurs.
\begin{enumeratei}
\item The integer \(s\) is a power of a suitable prime \(p\), $G$ is a $p$-group and $\cs G = \{ 1, s \}$.
\item For a suitable prime \(p\) we have $G = N P$, where $N$ is a nontrivial, nilpotent, normal $p$-complement of \(G\), and $P$ is a Sylow $p$-subgroup of \(G\) such that $\cs P = \{ 1, s_p \}$, $\zent P = \cent P N = \zent G \cap P$ and \(P/\zent P\) has exponent \(p\). Also, $|N:\cent N x| = s_{p'}$ and $\cent N x = \cent N {\cent P x}$ is abelian, for every element $x \in P \setminus \zent P$. Finally, $\cent N P =N\cap  \zent G$, every element in \(\zent N\) is nonvanishing in \(G\) and, if \(G\) is supersolvable, then \(N\) is abelian and \(P/\zent P\) is an elementary abelian \(p\)-group.
\item For suitable distinct primes \(p\) and \(q\) we have $G = NH$, where $N$ is a nilpotent normal $\{ p, q\}$-complement of \(G\), and $H$ is a non-supersolvable Hall $\{p,q\}$-subgroup of \(G\) such that: $\vcs {H}$ consists only of $s_{\{p,q\}}$, which is divisible by \(q^2\), and $H/\fit H$ is a Frobenius group whose Frobenius kernel is a $p$-group of exponent $p$ having index $q$.  In addition, if $P$ and $Q$ are a Sylow $p$- and a Sylow $q$-subgroup of $H$ respectively, then $\fit H = \fit G \cap H$ contains $\zent P$ and $\zent Q$. Also, $|N:\cent N x| = s_{\{p,q\}'}$ and $\cent N x = \cent N{\cent H x}$ is abelian, for every element $x \in H \setminus \fit H$.  Finally, $\cent N H = N\cap \zent G$, and every element in $N\oh p G$ is nonvanishing in \(G\). 
\end{enumeratei}
Furthermore, if $G$ satisfies {\rm(a)}, then $\vcs G = \{ s \}$. If $G$ satisfies {\rm(b)} and $N$ contains no vanishing elements of $G$, then $\vcs G = \{ s \}$. Finally, if $G$ satisfies {\rm(c)} with $|G/\fit G|$ odd and \(\fit G\) containing no vanishing elements of \(G\), then $\vcs G = \{ s \}$.
\end{theorem}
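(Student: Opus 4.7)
The plan is to pin down a normal subgroup $X\le \fit G$ with the property that every element of $G\setminus X$ is a vanishing element of $G$, and therefore lies in a conjugacy class of size exactly $s$: namely $X=\zent{\fit G}$ via Theorem~\ref{INW} in the supersolvable branch, and $X=\fit G$ via Theorem~\ref{ThmD} in the odd branch. If $G=X$, the supersolvable alternative forces $G$ abelian (contradicting $\vcs G=\{s\}$) while the odd alternative forces $G$ nilpotent, in which case Theorem~\ref{INW} together with the It\^o--Ishikawa theorem on conjugate-rank-one groups and the equality $\pi(s)=\pi(G)$ deliver conclusion~(a). So I may assume $X<G$ and invoke Lemma~\ref{prime quotients}: the ``cyclic of order coprime to $s$'' alternative is excluded by $\pi(s)=\pi(G)$, leaving that every nontrivial element of $G/X$ has prime order. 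Since $G$ is solvable in both branches (directly in the first, via Feit--Thompson applied to $G/\fit G$ in the second), Theorem~\ref{cheng} leaves two possibilities: $G/X$ is a $p$-group of exponent $p$, or $G/X$ is a Frobenius group with $p$-kernel of exponent $p$ and cyclic complement of prime order $q$.

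The first possibility will yield conclusion~(b). Here $G/\fit G$ is a $p$-group, so the Hall $p'$-subgroup $N$ of $\fit G$ is nilpotent and is a normal $p$-complement of $G$, giving $G=NP$ for some $P\in\syl pG$. Since $X\subseteq N\cent PN$, every element of $G\setminus N\cent PN$ has the same $G$-class size, so Theorem~\ref{same size} applies to $(G,N,P)$ and directly yields the centralizer identities of~(b) -- namely $\cent N x=\cent N{\cent P x}$ abelian and $|N:\cent N x|=s_{p'}$, $|P:\cent P x|=s_p$ for every $x\in P\setminus\cent PN$. A separate argument pins down $\cent PN=\zent P=\zent G\cap P$: any element of $\cent PN$ has $G$-class size a $p$-power, which for a non-prime-power $s$ forces it into $X$ and then into $\zent P$, while conversely $\zent P\le X$ via Theorem~\ref{INW} applied to the $p$-index analysis of class sizes. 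The exponent $p$ claim for $P/\zent P$ is inherited from $G/X$, $\cs P=\{1,s_p\}$ comes from the $P$-component of the centralizer identity, and under supersolvability the extra refinements ($N$ abelian and $P/\zent P$ elementary abelian) follow from $F_{p'}$ embedding into the $p$-group $G/\zent{\fit G}$ and from the supersolvable structure of $P$ combined with class-size bounds.

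The second possibility is handled analogously to produce conclusion~(c), with the Hall $\{p,q\}'$-subgroup of $\fit G$ playing the role of $N$ and a Hall $\{p,q\}$-subgroup $H$ playing the role of $P$; again $X\subseteq N\cent HN$, so Theorem~\ref{same size} supplies the centralizer identities, while Lemma~\ref{hypothesis1}(a) identifies $\cent NH$ with $N\cap\zent G$ and lets one recognize $\fit H=\fit G\cap H$. The divisibility $q^2\mid s_{\{p,q\}}$ is extracted from the Frobenius structure of $H/\fit H$ (a $p$-kernel of index $q$) combined with the non-supersolvability of $H$, which forces the $q$-part of $|H|$ to exceed $q$ and so contributes an extra factor of $q$ to the common class size; this is also why the supersolvability hypothesis on $G$ forbids this branch entirely. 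The converse implications at the end of the statement are verified by Lemma~\ref{hypothesis1}(b,c) together with the centralizer identifications, the stated nonvanishing hypothesis removing the only possible obstructions to $\vcs G=\{s\}$. The step I expect to be the main obstacle is the fine bookkeeping in case~(c): extracting the $q^2$-divisibility from non-supersolvability cleanly, identifying $\fit H$ inside $G$, and showing that all of $N\oh pG$ is nonvanishing in $G$, since two primes are simultaneously in play and the Frobenius structure of $G/X$ must be coordinated with the internal structure of the Hall $\{p,q\}$-subgroup.
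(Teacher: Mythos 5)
Your overall architecture matches the paper's (reduce to a normal subgroup $X\le\fit G$ outside of which everything vanishes, run Lemma~\ref{prime quotients} and Theorem~\ref{cheng}, then feed Theorem~\ref{same size} the resulting configurations), but two of the steps you gloss over are genuine gaps. First, in case (b) your route to $\cent P N=\zent P$ only shows that both $\cent P N$ and $\zent P$ land inside $X\cap P$; that does not give $\cent P N\le\zent P$, and without this inclusion neither $\zent P=\cent P N=\zent G\cap P$ nor $\cs P=\{1,s_p\}$ follows (elements of $\cent P N\setminus\zent P$ would be unaccounted for). The missing ingredient is Lemma~\ref{lemma 1}: since $G=N\rtimes P$ with $P$ nilpotent, every element of $P\setminus\zent P$ is vanishing in $G$, while every element of $\cent P N$ has class size a $p$-power $\ne s$ and hence is nonvanishing; this is what closes the chain $\zent P\le\oh p G\le\cent P N\le\zent P$. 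Second, your derivation of $q^2\mid s_{\{p,q\}}$ in case (c) --- that the $q$-part of $|H|$ exceeding $q$ ``contributes an extra factor of $q$ to the common class size'' --- is not a valid inference; a large Sylow $q$-subgroup does not force $q^2$ into a class size. The paper's actual argument is a several-step contradiction: if $s_q=q$, then for $g\in P\setminus\oh p G$ one has $\cent G g\le K$ and $|G:\cent G g|_q=q=|G:K|$, forcing $\oh q G\le\cent G g$; hence $\oh q G$ centralizes $P$, so $NP\nor G$ and Lemma~\ref{lemma 1} makes every element of $Q\setminus\zent Q$ vanishing; thus $\oh q G\le\zent Q$, so $Q/\zent Q$ is cyclic and $Q$ is abelian, contradicting $\zent Q\le\fit G\cap H<Q$. (Relatedly, the reason supersolvability excludes case (c) is not this divisibility but simply that $H/\fit H$ is nonabelian, whereas subgroups of supersolvable groups satisfy $H'\le\fit H$.)

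Two smaller points. Taking $X=\zent{\fit G}$ in the supersolvable branch misaligns your trichotomy with the theorem's conclusions, which are phrased in terms of $G/\fit G$: from ``$G/\zent{\fit G}$ is Frobenius'' you cannot immediately read off the structure of $G/\fit G$, and conclusion (a) in that branch must then arise from the $N=1$ sub-case of your first possibility rather than from $G=X$, which you leave implicit. The paper runs the main trichotomy on $G/\fit G$ in both branches and only invokes $\zent{\fit G}$ afterwards, inside case (b), to deduce that $N$ is abelian when $G$ is supersolvable. Finally, the assertion that every element of $N\oh p G$ is nonvanishing in case (c), which you flag as the main obstacle but do not prove, falls out of the $q^2$-divisibility once it is established: such elements centralize $\oh q G$, so the $q$-part of their class size is at most $|G|_q/|\oh q G|=q<q^2\le s_q$, and hence they cannot lie in a class of size $s$.
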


\begin{proof}
Set \(F=\fit G\): in view of Theorem~\ref{INW} and of Theorem \ref{ThmD}, our assumptions imply that every element of \(G\setminus F\) is a vanishing element of \(G\). If \(|G/F|\) is coprime with \(\pi(s)=\pi(G)\), then \(|G/F|=1\); in other words, \(G\) is nilpotent and we easily get conclusion (a). On the other hand, if this does not happen, then  Lemma~\ref{prime quotients} yields that every element of \(G/F\) has prime order, and again we are in a position to use the classification provided by Theorem~\ref{cheng} (of course with no need to consider the case \(G/F\simeq A_5\)).

\smallskip
So, let us assume first that \(G/F\) is a \(p\)-group of exponent \(p\), where \(p\) is a suitable prime. Our aim is to show that, under this hypothesis, conclusion (b) holds. 

In this case clearly \(G\) has a nontrivial, nilpotent, normal \(p\)-complement \(N\) and, if \(P\) is a Sylow \(p\)-subgroup of \(G\), then we have \(G=NP\). Observe that, by Lemma~\ref{prime quotients}, \(\zent P\) lies in \(\zent F\), thus it lies in the maximal normal \(p\)-subgroup \(\oh p G\) of \(G\), which in turn lies in \(\cent P N\). However, every element \(x\) in \(\cent P N\) has a conjugacy class whose size is coprime with  each of the prime factors of \(|N|\), thus \(x\) cannot be vanishing in \(G\) (note that, for the same reason, every element of \(\zent N\) is also nonvanishing in \(G\)): as a consequence, taking into account that every element of \(P\setminus\zent P\) is vanishing in \(G\) by Lemma~\ref{lemma 1}, we see that the inclusion \(\cent P N\leq \zent P\) holds as well. Now we have \(\zent P=\oh p G=\cent P N\), which immediately implies \(\cent P N=\zent G\cap P\). We also see that \(F=N\times\zent P\) and \(P/\zent P\simeq G/F\) has exponent \(p\). Finally, note that \(N\cent P N=F\), and therefore we can appeal to Theorem~\ref{same size} obtaining the following properties: for every \(x\in P\setminus\zent P\), we have $|P:\cent P x| = s_p$ (i.e., $\cs P = \{ 1, s_p \}$), $|N:\cent N x| = s_{p'}$, and $\cent N x = \cent N {\cent P x}$ is abelian. 

Next, observe that the elements of \(\cent N P\) are nonvanishing in \(G\), because their conjugacy class sizes are not divisible by \(p\); an application of Lemma~\ref{hypothesis1}(c) yields now that those elements are nonvanishing also in the nilpotent group \(N\), so they lie in \(\zent N\). We conclude that \(\cent N P\leq \zent N\), whence \(\cent N P=N\cap\zent G\). 

To conclude our analysis that leads to conclusion (b), it remains to prove that \(N\) is abelian if \(G\) is supersolvable. Under this additional assumption, we know (Theorem~\ref{INW}) that every element of \(G\setminus\zent F\) is vanishing in \(G\), thus, as in the first paragraph of this proof, we apply Theorem~\ref{cheng}: either \(G/\zent F\) is a \(p\)-group or it is a Frobenius group whose kernel has prime index in \(G/\zent F\). In the former case \(N\) is forced to coincide with \(\zent N\), as \(N/\zent N\simeq F/\zent F\) has order coprime to \(p\), and we get the desired conclusion. In the latter case it is easily seen that the (nilpotent) Frobenius kernel of \(G/\zent F\) must coincide with \(F/\zent F\); but then, \(P/\zent P\simeq G/F\) would have prime order and so \(P\) would be abelian, which is a contradiction.

Note also that, when \(G\) is supersolvable, we have \(G'\leq F\). So \(P/\zent P\) (being isomorphic to \(G/F\)) is abelian and, since it has exponent \(p\), it is an elementary abelian \(p\)-group.

\smallskip

The other possibility given by Theorem \ref{cheng} is that $G/F$ is a Frobenius group whose kernel \(K/F\) is a \(p\)-group of exponent \(p\) and of index \(q\) in \(G/F\), where \(p\) and \(q\) are suitable primes. In this case we will reach conclusion (c). To begin with, \(G\) has obviously a nilpotent normal \(\{p,q\}\)-complement \(N\), so that \(G=NH\) where \(H\) is a Hall \(\{p,q\}\)-subgroup of \(G\). Denoting by \(P\) and \(Q\) a Sylow \(p\)- and a Sylow \(q\)- subgroup of \(H\) respectively, we get \(H=PQ\). 

Observe that, if \(N\) is nontrivial, then every element in \(\cent H N\) is nonvanishing in \(G\) because its conjugacy class size is coprime with any prime divisor of \(|N|\). As a consequence, we have \(\cent H N\leq F\) in this case, whence \(\cent H N=H\cap F\) and  \(N\cent H N=F\). Now Theorem~\ref{same size} yields the following properties: $\cent N x = \cent N {\cent H x}$ is abelian, $|N:\cent N x| = s_{\{p,q\}'}$, and $|H:\cent H x| = s_{\{p,q\}}$ for all elements $x \in H \setminus (H \cap F)$. Recall also that, if \(x\) is a vanishing element of \(H\), then Lemma~\ref{lemma 1} yields that \(x\) is also a vanishing element of \(G\), and therefore it lies in \(H\setminus (H\cap F)\); we conclude that \(\vcs H=\{s_{\{p,q\}}\}\). On the other hand,  if $N = 1$, then \(H=G\), $s = s_{\{p,q\}}$, and we trivially have the same properties.



Notice that if $g \in P \setminus \oh p G$ then, since $G/F$ is a Frobenius group with kernel \(K/F\) and \(gF\) is a nontrivial element of \(K/F\), we have $\cent G g F/F \le \cent {G/F} {gF} \le K/F$.  It follows that $\cent G g \le K$, so \(\cent K g=\cent G g\). We claim now that \(s\) is divisible by \(q^2\) and, arguing by contradiction, we assume $s_q = q$. As our element \(g\) doesn't lie in \(F\), we have $|G:\cent G g|_q = s_q = q$; since $q = |G:K|$ divides $|G:\cent G g|$, it follows that $\cent G g$ must contain \(\oh q G\), which is the Sylow $q$-subgroup of $K$. We have shown that $\oh q G \le \cent G g$ for every element $g \in P \setminus\oh p G$, but then clearly \(\oh q G\) centralizes in fact \(P\), and therefore every element of \(\oh q G\) is nonvanishing in \(G\). Also, $NP$ is now a normal \(q\)-complement of $K$, hence \(NP\) is normal in \(G\) and, by Lemma~\ref{lemma 1}, every element in $Q \setminus \zent Q$ is vanishing in \(G\). The conclusion so far is $\oh q G \le \zent Q$, but then \(|Q/\zent Q|\) is a divisor of \(|Q/\oh q G|=q\), which is a contradiction because it implies that \(Q\) is abelian. Our claim that \(s\) is divisible by \(q^2\) is then established. 

The property proved in the paragraph above implies that a vanishing element of \(G\) cannot centralize \(\oh q G\) (note that, by this reason, every element in \(N\oh p G\) is nonvanishing in \(G\)) and so, in particular, every element in \(P\setminus\oh p G\) doesn't lie in $\oh p H$. It follows that $\oh p H =\oh p G$. Note that we also have \(\oh q H=\oh q G\), because otherwise (recalling that \(|Q:\oh q G|=q\)) we would have \(\oh q H= Q\), and so \(Q\) would be normalized by \(P\); this clearly cannot happen, as \(G/F=PF/F\rtimes QF/F\) is a Frobenius group. Now, we get \(F\cap H=\oh p G\times\oh q G=\oh p H\times\oh q H=\fit H\), and the desired conclusion about the factor group \(H/\fit H\) being a Frobenius group follows.  Observe that, \(H/\fit H\) being nonabelian, we have that \(H\) is not supersolvable. Note also that no element of \(\zent P\) can be vanishing in \(G\), because its conjugacy class size is not divisible by \(p\), so \(\zent P\) lies in \(F\cap H\), and a similar argument shows that \(\zent Q\leq F\cap H\) as well. 


Finally, observe that every element of \(N\setminus\zent N\) is a vanishing element of \(N\), and if \(\cent N H\setminus\zent N\) is nonempty, then every element of that set is in fact vanishing in \(G\) by Lemma~\ref{hypothesis1}(c). But we have seen that no element of \(N\) can be vanishing in \(G\), so \(\cent N H\setminus\zent G\) is actually empty, and also the remaining claim that \(\cent N H=N\cap\zent G\) easily follows.


\smallskip
We move now to the converse statement. It is immediate to see that, if $G$ satisfies conclusion (a), then $\vcs G = \{ s \}$.  

Also, if $G$ satisfies conclusion (b), then it satisfies the hypotheses of Theorem~\ref{same size} (``if part") and so every element in \(G\setminus N\cent P N=G\setminus F\) will lie in conjugacy classes of the same size (namely, of size \(|N:\cent N x|\cdot |P:\cent P x|\), for any \(x\in P\setminus\zent P\)). If every element of \(N\) is nonvanishing in \(G\), which certainly happens if \(N\) is abelian, then by Clifford Theory (and taking into account that \(\zent P\leq\zent G\)) it is not difficult to see that in fact every element in \(F=N\times\zent P\) is nonvanishing in \(G\). As a consequence, \(\vcs G\) will consist of a single element. 

To close with, suppose that $G$ satisfies conclusion (c) and \(G/F\) has odd order. Then also \(H/\fit H\simeq G/F\) has odd order and, since \(H\) is solvable, every element of \(H\setminus\fit H\) is vanishing in \(H\) by Theorem \ref{ThmD}. Therefore, by our assumptions, we have $|H:\cent H x| = s_{\{p,q\}}$ for all $x \in H \setminus \fit H$. Observe that, \(\fit H\) being contained in \(\fit G\), we have \(\fit H\leq\cent H N\); but if the inclusion is proper, then we can take \(x\in\cent H N\setminus\fit H\), getting that \(|N:\cent N x|=1\) for this particular \(x\), hence, by assumption, for every \(x\) in \(H\setminus \fit H\). As this is clearly impossible, we conclude that \(\cent H N=\fit H\), and so \(N\cent H N=F\). Now we are in a position to apply the ``if part" of Theorem~\ref{same size}, getting that all the elements of \(G\setminus F\) lie in conjugacy classes of size \(s_{\{p,q\}}\cdot s_{\{p,q\}'}=s\).  The assumption that $F$ contains no vanishing elements of $G$ yields the desired conclusion.
\end{proof}

We can now prove a slightly extended form of Theorem~B which is an ``if and only if" theorem.

\begin{theorem}\label{thmB}
Let $G$ be a group, \(s\) a positive integer, and set \(\pi=\pi(s)\). If \(G\) is supersolvable, $\vcs G = \{ s \}$ and \(\pi=\pi (G/\zent G)\), then, up to an abelian direct factor of \(\pi'\)-order, one of the following occurs.
\begin{enumeratei}
\item The integer \(s\) is power of a suitable prime \(p\), \(G\) is a \(p\)-group and $\cs G = \{ 1, s \}$.
\item For a suitable prime \(p\) we have $G = N P$, where $N$ is a nontrivial, abelian, normal $p$-complement of \(G\), and $P$ is a Sylow $p$-subgroup of \(G\) such that: $\cs P = \{ 1, s_p \}$, $\zent P = \cent P N = \zent G \cap P$ and \(P/\zent P\) is an elementary abelian \(p\)-group. Also, $|N:\cent N x| = s_{p'}$ and $\cent N x = \cent N {\cent P x}$ for every element $x \in P \setminus \zent P$. Finally, $\cent N P =N\cap  \zent G$ and \(N\) contains no vanishing elements of \(G\).
 \end{enumeratei}
 Conversely, if $G$ is as in {\rm(a)} or  {\rm(b)}, then $\vcs G=\{s\}$ and \(\pi=\pi(G/\zent G)\).
 \end{theorem}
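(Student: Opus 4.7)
The plan is to reduce to the setting of Theorem~\ref{pi(s)=pi(G)} (where the hypothesis is $\pi(s)=\pi(G)$) by extracting a central abelian direct factor of $\pi'$-order, and then to invoke that theorem. First I would examine primes in $\pi(G)\setminus\pi$: any such $q$ does not divide $|G/\zent G|$, so the Sylow $q$-subgroup of $G$ is contained in $\zent G$. Consequently the Hall $\pi'$-subgroup $A$ of $\zent G$ is in fact a central Hall $\pi'$-subgroup of the whole $G$; Schur--Zassenhaus together with the centrality of $A$ then gives a direct product $G=G_0\times A$ in which $G_0$ is a Hall $\pi$-subgroup. Because $A\leq\zent G$, every $\chi\in\irr G$ factors as $\chi_0\otimes\lambda$ with $\chi_0\in\irr{G_0}$ and $\lambda\in\irr A$; such a character vanishes on $xa$ (with $x\in G_0$, $a\in A$) iff $\chi_0(x)=0$, while $|(xa)^G|=|x^{G_0}|$ and $\zent G=\zent{G_0}\times A$. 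These identities yield $\vcs{G_0}=\vcs G=\{s\}$, $\pi(G_0)=\pi$, and $\pi(G_0/\zent{G_0})=\pi(G/\zent G)=\pi$, and since supersolvability is inherited by $G_0$, Theorem~\ref{pi(s)=pi(G)} applies.

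Of the three cases produced by Theorem~\ref{pi(s)=pi(G)}, case (c) is ruled out because it would give a Hall $\{p,q\}$-subgroup $H$ of $G_0$ with $H/\fit H$ Frobenius, hence non-supersolvable, contradicting supersolvability of $G_0$. Cases (a) and (b) of Theorem~\ref{pi(s)=pi(G)} translate respectively to (a) and (b) here; the supersolvability clause at the end of Theorem~\ref{pi(s)=pi(G)}(b) promotes $N$ to abelian and $P/\zent P$ to elementary abelian. The clause ``$N$ contains no vanishing elements of $G_0$'' listed here in (b) then follows from the clause ``every element in $\zent N$ is nonvanishing in $G_0$'' in Theorem~\ref{pi(s)=pi(G)}(b), since $N=\zent N$ when $N$ is abelian.

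For the converse I would again write $G=G_0\times A$ with $G_0$ as in (a) or (b). The same character/centralizer calculation as above gives $\vcs G=\vcs{G_0}$ and $G/\zent G\cong G_0/\zent{G_0}$, so it suffices to show $\vcs{G_0}=\{s\}$ and $\pi(G_0/\zent{G_0})=\pi$. In case (a), $G_0$ is a non-abelian $p$-group with $\cs{G_0}=\{1,s\}$, whence Theorem~\ref{INW} forces every non-central element of $G_0$ to be vanishing, so $\vcs{G_0}=\{s\}$; also $\pi(G_0/\zent{G_0})=\{p\}=\pi(s)$. In case (b), the assumption that $N$ contains no vanishing elements of $G_0$ is built into the statement, so the converse of Theorem~\ref{pi(s)=pi(G)}(b) delivers $\vcs{G_0}=\{s\}$; for $\pi(G_0/\zent{G_0})=\pi$ the prime $p$ comes from $P/\zent P$, the primes in $\pi\setminus\{p\}$ come from the embedding $N/(N\cap\zent{G_0})\hookrightarrow G_0/\zent{G_0}$ together with $s_{p'}\mid|N:\cent N P|$, and no extra primes arise because any Sylow $q$-subgroup of $N$ with $q\notin\pi$ is forced into $\cent N P=N\cap\zent{G_0}$ by the identity $|N:\cent N x|=s_{p'}$.

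The chief difficulty I anticipate is performing the direct-factor reduction cleanly in both directions and confirming that $\vcs{\cdot}$, $\zent{\cdot}$, and the $\pi$-structure all transfer across the product $G=G_0\times A$; once this is in place, Theorem~\ref{pi(s)=pi(G)} supplies essentially everything else, and only the bookkeeping over $\pi(G/\zent G)$ in the converse remains.
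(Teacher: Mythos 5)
Your proposal is correct and follows essentially the same route as the paper: both reduce to a decomposition $G=G_0\times A$ with $G_0$ a Hall $\pi$-subgroup and $A$ a central abelian $\pi'$-Hall subgroup, and then invoke Theorem~\ref{pi(s)=pi(G)} for both directions (the paper obtains the decomposition via Lemma~\ref{first}(a) and the observation that the $\pi$-complement is central, while you get it directly from $\pi=\pi(G/\zent G)$ and Schur--Zassenhaus, which is an inessential variation). The additional bookkeeping you supply --- transferring $\vcs{\cdot}$ and $\zent{\cdot}$ across the direct product, excluding case (c) by supersolvability, and verifying $\pi(G_0/\zent{G_0})=\pi$ in the converse --- is exactly what the paper leaves implicit, and it checks out.
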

 
\begin{proof}
Let \(\vcs G=\{s\}\). By Lemma \ref{first}(a) we know that $G = G_0H$, where $G_0$ is a normal Hall $\pi$-subgroup of $G$ and $H$ is an abelian $\pi$-complement of $G$; note that the prime divisors of \(H\) do not belong to \(\pi(G/\zent G)\), which easily yields \(H\leq\zent G\) and thus 
\(G=G_0\times H\). As \(\pi(G_0)=\pi\), the ``if part" of  the result follows by Theorem~\ref{pi(s)=pi(G)}.

The converse statement is just an application of Theorem~\ref{pi(s)=pi(G)}, and the proof is complete.
\end{proof}

Note in the case of (b) in the converse of Theorem \ref{thmB} that the group $G$ is not necessarily supersolvable.

\section{Some open problems, examples and concluding remarks}

Let us consider the general question of describing all the groups having a single vanishing conjugacy class size. As a first step, one may ask whether a group of this kind should be necessarily solvable; we leave this as an open problem, but we can present one small contribution in this direction.

\begin{proposition}\label{abelian}
Let \(G\) be a group. If $\vcs G = \{ s \}$, then all minimal normal subgroups of $G$ are abelian.  
\end{proposition}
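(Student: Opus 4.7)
The plan is to argue by contradiction. Suppose $G$ has a nonabelian minimal normal subgroup $M$. Being characteristically simple, $M$ factors as $M = S_{1}\times\cdots\times S_{k}$ with each $S_{i}\cong S$ a fixed nonabelian simple group, and I aim to exhibit two vanishing elements of $G$ lying inside $M$ whose $G$-class sizes are distinct, contradicting $\vcs G=\{s\}$.

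First I would invoke the character theory of nonabelian simple groups (and ultimately the classification of finite simple groups, e.g.\ via Malle--Navarro--Olsson-type results on zeros of characters) to select two pairs $(u,\varphi_{u})$ and $(v,\varphi_{v})$ with $u,v\in S\setminus\{1\}$ and with $\varphi_{u},\varphi_{v}\in\irr S$ both nonprincipal, $\aut S$-invariant, and satisfying $\varphi_{u}(u)=0=\varphi_{v}(v)$, with $|S:\cent S u|\neq|S:\cent S v|$. As a concrete template, in $S=\alt 5$ the degree-$4$ irreducible character vanishes on the (size-$15$) involution class, and the Steinberg character (of degree $5$) vanishes on the $5$-cycles (class size $12$); both are $\aut{\alt 5}$-invariant. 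Moreover, using the freedom in this choice (e.g.\ taking $u,v$ of distinct prime orders from $\pi(S)$), I will arrange the ratio $|S:\cent S u|/|S:\cent S v|$ to carry a prime divisor not appearing in $|G/M|$.

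Next I would form $\theta_{w}:=\varphi_{w}^{\otimes k}\in\irr M$ and $\tilde w:=(w,\ldots,w)\in M$ for $w\in\{u,v\}$. Since $\varphi_{w}$ is $\aut S$-invariant and the tensor is $\sym k$-symmetric, $\theta_{w}$ is invariant under $\aut S\wr\sym k=\aut M$, hence $G$-invariant. Also $\theta_{w}(\tilde w)=\varphi_{w}(w)^{k}=0$. Any $\chi_{w}\in\irr G$ lying over $\theta_{w}$ (which exists by Frobenius reciprocity) satisfies $\chi_{w}|_{M}=e_{w}\theta_{w}$ by Clifford's theorem, so $\chi_{w}(\tilde w)=e_{w}\theta_{w}(\tilde w)=0$. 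Therefore $\tilde u$ and $\tilde v$ are vanishing elements of $G$.

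Finally, since $M\nor G$, any two $G$-conjugate elements of $M$ have equal centralizer order in $M$, so the $G$-class of $\tilde w$ is a disjoint union of $M$-classes of the same size $|M:\cent M{\tilde w}|=|S:\cent S w|^{k}$, yielding $|G:\cent G{\tilde w}|=r_{w}\cdot|S:\cent S w|^{k}$, where $r_{w}=|G:M\cent G{\tilde w}|$ divides $|G/M|$. The coprimality condition imposed in the first step then prevents $|G:\cent G{\tilde u}|=|G:\cent G{\tilde v}|$, which is the desired contradiction. The crux of the argument, and the main obstacle, lies precisely here: making the inequality of $M$-centralizer indices survive multiplication by a divisor of $|G/M|$. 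This forces the CFSG input in the first step to be tuned to the prime set $\pi(G/M)$; producing, inside every nonabelian simple $S$, two $\aut S$-invariant vanishing classes whose size-ratio has the appropriate prime support is the real work behind the plan.
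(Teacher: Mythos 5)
There is a genuine gap, and it sits exactly where you flag it. The constructive part of your argument is fine: if \(\varphi_w\in\irr S\) is nonprincipal, \(\aut S\)-invariant and vanishes at \(w\), then \(\theta_w=\varphi_w^{\otimes k}\) is \(G\)-invariant, and Clifford theory shows every \(\chi\in\irr G\) over \(\theta_w\) vanishes at \(\tilde w\), so \(\tilde u,\tilde v\) are vanishing elements of \(G\). What is never established is that their \(G\)-class sizes differ, and the device you propose for forcing this cannot work in general. You want the ratio \(|S:\cent S u|/|S:\cent S v|\) to carry a prime outside \(\pi(G/M)\); but every prime dividing a class size of \(S\) divides \(|M|\), and nothing prevents \(\pi(G/M)\) from containing all of \(\pi(S)\) (the cofactor \(r_w=|G:M\cent G{\tilde w}|\) already divides \(|G:M\cent G M|\), which embeds in \(\out M\cong\out S\wr\sym k\) and so can involve the primes of \(|\out S|\) and of \(k!\); further chief factors of \(G\) above \(M\) can supply any remaining primes of \(|S|\)). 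So one is left having to prove \(r_u\,|S:\cent S u|^k\neq r_v\,|S:\cent S v|^k\) with \(r_u,r_v\) uncontrolled, uniformly over all nonabelian simple \(S\) --- a substantial CFSG-dependent project that the proposal only gestures at (your \(\alt 5\) computation is a single data point, not the general case). As written, the argument does not close.

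For comparison, the paper's proof never looks inside \(M\) at all. From \(\vcs G=\{s\}\) and Lemma~\ref{first}(a) one gets \(M\le N\), the normal Hall \(\pi(s)\)-subgroup, and since a nonabelian minimal normal subgroup has at least three prime divisors, some prime \(p\ge 5\) divides \(|M|\) and hence lies in \(\pi(s)\). For \(P\in\syl p G\), every \(x\in\zent P\) has class size prime to \(p\), hence different from \(s\), hence nonvanishing; Theorem~A of \cite{DNPST} (nonvanishing elements of order coprime to \(6\) lie in the Fitting subgroup) then gives \(\zent P\le\fit G\), and since \(1<\zent P\cap M\), minimality forces \(M\le\fit G\), contradicting the nonabelianity of \(M\). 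Both routes ultimately rest on the classification, but the paper invokes it through a packaged, already-proven theorem, whereas your plan requires a new existence-and-arithmetic statement about characters of simple groups that remains to be proved.
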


\begin{proof}
Setting \(\pi=\pi(s)\), we apply Lemma~\ref{first}(a) to decompose \(G\) as \(NH\), where \(N\) is a normal Hall \(\pi\)-subgroup and \(H\) an abelian \(\pi\)-complement of \(G\).  Suppose, for a proof by contradiction, that $M$ is a nonabelian minimal normal subgroup of $G$.  Hence, there exists a prime $p\not\in\{2,3\}$ that divides $|M|$.  Since $G/N$ is abelian we get $M \le N$, and this implies $p \in \pi $ (i.e., $p$ divides $s$).

Let $P$ be a Sylow $p$-subgroup of $G$.  We know that $P \cap M$ is a nontrivial normal subgroup of $P$, thus $\zent P \cap M=\zent P \cap (P \cap M) > 1$.

Suppose $x \in \zent P$. As $P \le \cent G x$, we see that $p$ does not divide $|G:\cent G x|$, hence the conjugacy class of \(x\) in \(G\) has a size different from \(s\) and $x$ is a nonvanishing element of $G$.  Since $p$ is relatively prime to $6$, by Theorem A in \cite{DNPST} we get $x \in \fit G$.  Now, we have shown that $\zent P \le \fit G$ and $M \cap \zent P > 1.$  This implies $M \leq \fit G$, against the fact that $M$ is a nonabelian minimal normal subgoup of \(G\). 
\end{proof}

As a consequence of the above proposition, if the group $G$ has only one vanishing class size then $G$ is not an almost-simple group.  

\begin{rem} \label{generalization} As regards the classification of the \emph{solvable} groups \(G\) such that \(|\vcs G|=1\), giving a closer look at Theorem~\ref{supersolvable 1} and Theorem~\ref{pi(s)=pi(G)}, we see that the assumption of \(G\) being supersolvable or having a normal Sylow \(2\)-subgroup is really used only to guarantee that \emph{every element of \(G\setminus \fit G\) is vanishing in \(G\)}.
A long-standing open problem, going back to the paper~\cite{wolf}, is to determine whether the above property holds in general if \(G\) is solvable. If the answer to this conjecture would turn out to be positive, then our results in this paper would actually provide a characterization of solvable groups having a single vanishing conjugacy class size.
\end{rem}

To sum up, we strongly believe that the groups having a unique vanishing conjugacy class size are only those that are described in the conclusions of Theorem~\ref{nilpotent} and of Theorem~\ref{supersolvable 1} (although, as explained after the following examples, we are not sure that all of them actually occur). We leave this conjecture for future research.

\medskip

Next, we discuss some examples showing that groups as in conclusion (b) of Theorem~\ref{thmB} do exist.

\begin{example} For our first example, take $P$ to be an extra-special group of order $8$.  (I.e., $P$ can be either dihedral or quaternion.)  We take $N$ to be an elementary abelian group of order $r^3$ for $r = 3, 5, 7, 11$, so, we write \(N\) as  $C_1 \times C_2 \times C_3$, where each $C_i$ is cyclic of order $r$.  We then define the action of $P$ on $N$ by letting $M_1$, $M_2$, and $M_3$ be the three distinct maximal subgroups of $P$, and we have $P$ act on $C_i$ by taking $M_i$ in the kernel of the action.  It is easy to see that the resulting semi-direct products $G$ are supersolvable groups.  In Magma or GAP, when $r = 3$ this yields the groups SmallGroup(216,132) and SmallGroup(216,133), and computing the character tables of these groups in Magma, we see that they have $\vcs {G} = \{ 18 \}$.  When $r =5$, this yields SmallGroup(1000,142) and SmallGroup(1000,143), and computing the character tables of these groups in Magma, we see that they have $\vcs {G} = \{ 50\}$.  When $r = 7$ or $11$, the resulting groups are not in the SmallGroup database in Magma or GAP, but we can have Magma compute the character tables, and we see that $\vcs {G} = \{ 98 \}$ when $r  = 7$ and $\vcs {G} = \{ 242 \}$ when $r = 11$.  We were also able to compute the group where each of the $C_i$ are cyclic of order $9$ and $G/M_i$ acts fixed-point freely on $C_i$.  For this $G$, we have $\vcs G = \{ 162\}$.

We also present an example of odd order.  Take $P$ to be an extraspecial group of order $3^3$ and $N$ to be an elementary abelian $7$-group of order $7^4$.  Thus, we can write $N = C_1 \times C_2 \times C_3 \times C_4$, where each $C_i$ is cyclic of order $7$.  Let $M_1, M_2, M_3,$ and $M_4$ be the distinct maximal subgroups of $P$.  We can define an action of $P$ on $N$ by having $P$ act on $C_i$ with $M_i$ in its kernel for each $i = 1,2,3,4$.  Using Magma, we determine that the resulting semi-direct product $G$ has $\vcs {G} = \{ 1029 \}$. 

Modifying the last example, we can obtain an example where $G$ is not supersolvable.  Again, take $P$ to be an extraspecial group of order $3^3$.  Take $N = K_1\times K_2 \times K_3 \times K_4$ where each $K_i$ is a non-cyclic group of order $4$.  Let $M_1$, $M_2$, $M_3$, $M_4$ be the four distinct maximal subgroups of $P$.  There is an action of $P$ on $N$ by having $P$ act on $K_i$ so that $M_i$ is the kernel of the action for $i = 1, 2, 3, 4$.  Then take $G$ to be the resulting semi-direct product of $P$ acting on $N$.  Using Magma, we determine that $\vcs {G} = \{ 192 \}$. 
\end{example}

On the other hand, we do not have any example of a group as in (b) of Theorem~\ref{pi(s)=pi(G)} where \(N\) is nonabelian. We believe it is likely that such a group does exist, but the ones we can construct are too large for Magma to determine the vanishing classes, and at this time, we do not see another method to determine the vanishing classes of these groups.  We also don't know of any example of a group as in (c) of Theorem~\ref{pi(s)=pi(G)}. 

\bigskip

\subsection*{Acknowledgment}

Much of the research of this paper was done in January 2020 during a visit to  Universit\`a degli Studi di Milano by the second and third authors.  The second and third authors would like to thank Universit\`a degli Studi di Milano for its hospitality.


\begin{thebibliography}{99}

\bibitem{babo}  
A. Ballester-Bolinches, J. Cossey, Y. Li,  \emph{Mutually permutable products and conjugacy classes}, Monatsh. Math. 170 (2013), 305--310.

\bibitem{BLP}
M. Bianchi, M.L. Lewis, E. Pacifici, \emph{Groups with vanishing class size $p$}, J. Group Theory 23 (2020), 79--83.


\bibitem{brough}
J. Brough, \emph{On vanishing criteria that control finite group structure}, J. Algebra 458 (2016), 207--215.

\bibitem{cam} A.R. Camina, R.D.  Camina, \emph{The influence of conjugacy class sizes on the structure of finite groups: a survey}, Asian-Eur. J. Math. 4 (2011), 559-588.

\bibitem{cheng}
K.N. Cheng, M. Deaconescu, M.L. Lang, W. Shi, \emph{Corrigendum and
Addendum to `Classification of finite groups with all elements of prime
order'}, Proc. Amer. Math. Soc. 117 (1993), 1205--1207.



\bibitem{dolfi}
S. Dolfi, E. Jabara, \emph{The structure of finite groups of conjugate
rank 2}, Bull. London Math. Soc. 41 (2009), 916--926.

\bibitem {DNPST} S. Dolfi, G. Navarro, E. Pacifici, L. Sanus, P. H. Tiep,  \emph{Non-vanishing elements of finite groups}, J. Algebra 323 (2010), 540--545.

\bibitem{DPSex} S. Dolfi, E. Pacifici, L. Sanus, \emph{On zeros of characters of finite groups}, in ``Group Theory and Computation'', N.S.N. Sastry and M.K. Yadav editors, Springer, New York, 2018.

\bibitem{DPS} S. Dolfi, E. Pacifici, L. Sanus, \emph{Groups whose vanishing class sizes are not divisible by a given prime}, Arch. Math. 94 (2010), 311--317.

\bibitem{DPSS} S. Dolfi, E. Pacifici, L. Sanus, P. Spiga, \emph{On the vanishing prime graph of solvable groups}, J. Group Theory 13 (2010), 189--206.


\bibitem{text}  I.M. Isaacs, \emph{Character Theory of Finite Groups}, Academic Press, San Diego, California, 1976.

\bibitem{isaacs}
I.M. Isaacs, \emph{Groups with many equal class size}, Duke Math. J.
 37 (1970), 501--506.

\bibitem{wolf}
I.M. Isaacs, G. Navarro, T. R. Wolf, \emph{Finite group elements where no
irreducible chracter vanishes}, J. Algebra 222 (1999), 413--423.

\bibitem{ish} K. Ishikawa, \emph{On finite $p$-groups which have only two conjugacy lengths}, Israel J. Math. 129 (2002), 119-123. 

\bibitem{ito} N. It\^o, \emph{On finite groups with given conjugate types, I}, Nagoya Math. J. 6 (1953), 17-28.

\end{thebibliography}
\end{document}